\numberwithin{equation}{section}
\newtheorem{theorem}{Theorem}[section]
\newtheorem{lemma}[theorem]{Lemma}
\newtheorem{proposition}[theorem]{Proposition}
\newtheorem{remark}[theorem]{Remark}
\newcommand{\mc}[1]{{\mathcal #1}}
\newcommand{\mf}[1]{{\mathfrak #1}}
\newcommand{\bb}[1]{{\mathbb #1}}
\newcommand{\eps}{\varepsilon}
\newcommand{\scn}{\;{;}\;}
\DeclareMathOperator{\TV}{TV}
\DeclareMathOperator{\LS}{LS}
\let\oldtocsection=\tocsection
\let\oldtocsubsection=\tocsubsection
\let\oldtocsubsubsection=\tocsubsubsection
\renewcommand{\tocsection}[2]{\hspace{0em}\oldtocsection{#1}{#2}}
\renewcommand{\tocsubsection}[2]{\hspace{1em}\oldtocsubsection{#1}{#2}}
\renewcommand{\tocsubsubsection}[2]{\hspace{2em}\oldtocsubsubsection{#1}{#2}}
\DeclareRobustCommand{\SkipTocEntry}[5]{}
\begin{document}
	
\title[Sharp convergence to equilibrium for SSEP with reservoirs]{Sharp convergence to equilibrium for the SSEP with reservoirs}

\author{P. Gon\c calves}
\address[P. Gon\c calves]{Center for Mathematical Analysis, Geometry and Dynamical Systems, Instituto Superior T\'{e}cnico, Universidade de Lisboa, 1049-001 Lisboa, Portugal}
\email{pgoncalves@tecnico.ulisboa.pt}
\urladdr{\url{https://patriciamath.wixsite.com/patricia}}

\author{M. Jara}
\address[M. Jara]{Instituto de Matemática Pura e Aplicada, Estrada Dona Castorina, 110, 22460-320
	Rio de Janeiro, Brasil}
\email{mjara@impa.br}
\urladdr{\url{http://w3.impa.br/~monets/index.html}}

\author{R. Marinho}
\address[R. Marinho]{Center for Mathematical Analysis, Geometry and Dynamical Systems, Instituto Superior T\'{e}cnico, Universidade de Lisboa, 1049-001 Lisboa, Portugal}
\email{rodrigo.marinho@tecnico.ulisboa.pt}
\urladdr{\url{https://marinhor.weebly.com}}

\author{O. Menezes}
\address[O. Menezes]{Mathematics Department, Purdue University, 150 N. University Street, West Lafayette, IN 47907-2067 United States}
\email{omenezes@purdue.edu}
\urladdr{\url{https://www.math.purdue.edu/~omenezes/}}

\begin{abstract}
We consider the symmetric simple exclusion process evolving on the path of length $n-1$ in contact with reservoirs of density $\rho \in (0,1)$ at the  boundary. We use Yau's relative entropy method to show that if the initial measure is associated with a profile $u_0:[0,1] \to (0,1)$, then at explicit times $t^n(b)$ that depend on $u_0$,   the distance to equilibrium, in total variation distance, converges, as $n \to \infty$, to a profile
$\mathcal G(\gamma e^{-b})$. The parameter $\gamma$ also depends on the initial profile $u_0$ and $\mathcal G(m)$ stands for the total variation distance $\|\mc N (m,1) - \mc N(0,1)\|_{\TV}$.
\end{abstract}

\keywords{Cutoff, Glauber dynamics, log-Sobolev inequality, relative entropy method, SSEP}
\maketitle
\section{Introduction}

The convergence to the stationary state of the law of a finite-state, irreducible Markov chain is a classical problem in the theory of Markov chains. It is well known that for a \emph{fixed} Markov chain, the speed of convergence is exponential, with a rate equal to the \emph{spectral gap} of the chain. Driven by connections with MCMC, in the past 30 years the interest shifted to chains with \emph{large} state spaces. In what is usually called the \emph{modern} theory of Markov chains, convergence to stationarity is studied from a different point of view. One fixes a threshold distance, and the question is how much time does the chain need to get within this threshold from the stationary measure. The asymptotic analysis now leaves the threshold fixed and studies the time as the \emph{size} of the state space grows. This approach, pioneered by Aldous and Diaconis \cite{aldous}, leads to a new phenomenon, called the \emph{cut-off phenomenon}; see \cite{yuval} for an introduction. First, one fixes a way to measure the distance to the stationary state. From a probabilistic point of view, the most natural way is to use the \emph{total variation distance}. When the size of the chain is taken into account, convergence to the stationary state happens in a rather abrupt way: for times of order the so-called \emph{mixing time}, the law of the chain remains far from the stationary state. 
Then, during a \emph{time window} of order smaller than the mixing time, the law of the chain gets infinitesimally close to the stationary state. It is only after this mechanism that the convergence enters the exponential regime. Therefore, from the point of view of a fixed threshold, exponential convergence happens \emph{after} the threshold is met.

In order to discuss the point of view described above, it is necessary to consider a \emph{family} of Markov chains, parametrized by a parameter $n$ that is related to the size of the state space. The idea is that the size of the chain grows with $n$, and our task is to perform an asymptotic analysis of the distance to the stationary state as $n \to \infty$. In this article, we consider the symmetric exclusion process in contact with stochastic reservoirs, which is a family of Markov chains defined on the state space $\Omega_n:= \{0,1\}^{\{1,\dots,n-1\}}$, used to model the dynamics of boundary \emph{driven diffusive systems}, see \cite{BerD-SGabJ-LLan}. The dynamics of this model can be described as follows. Particles move as symmetric random walks under the exclusion constraint stating that there is at most one particle per site. Particles can be absorbed at the sites $x=0,n$ and particles can be  injected to the system whenever  the sites $x=1,n-1$ are empty.
It has been proved in \cite{gantert} that the mixing time of this model is of order $\mc O(n^2 \log n)$; the lower bound obtained in \cite{gantert} is conjectured to be optimal.

In a series of papers \cite{lacoin, lacoin profile, lacoin circle}, the author studied the mixing properties of the exclusion process on the circle and on the interval, obtaining sharp bounds on the mixing time that show the cut-off phenomenon for those models. Let us explain the estimate obtained in \cite{lacoin profile}, which is sharper than the usual cut-off estimates. Let $D_n(t)$ denote the total variation distance to the stationary state at time $t$. There exists a non-degenerated function $\mc G(b)$ and a positive constant $\lambda$ such that
\[
\lim_{n \to \infty} D_n \big( \tfrac{1}{2\lambda} n^2 \log n + b n^2 \big) = \mc G(b)
\]
for every $b \in \bb R$. This result is called \emph{profile cut-off} in the literature and it is stronger than cut-off as defined in \cite{yuval}. The function $\mc G$ is called the \emph{cut-off profile} and in the case of the exclusion process it can be expressed in terms of the total variation distance between two Gaussians with the same variance and with different means. The constant $\lambda$ turns out to be the \emph{spectral gap} of the Laplacian operator on the circle. In that case we say that the exclusion has \emph{Gaussian cut-off}. In \cite{gantert}, the authors show that the mixing time of the exclusion process with \emph{one} reservoir is also given by $\frac{1}{2 \lambda} n^2 \log n$, where $\lambda$ is now the spectral gap of the Laplacian operator on the interval with Dirichlet boundary conditions at one extreme of the interval and Neumann boundary conditions at the other extreme.

Observe that the classical definition of mixing time takes as initial condition for the chain the \emph{worst possible} initial state. 
In this article, we adopt a somewhat different point of view. Our aim is to observe the dependence of the mixing time with respect to the initial state of the chain. We take as initial states the so-called \emph{profile measures}, which are product measures whose densities are associated with a macroscopic initial density profile. Denote by $\nu_0^n$ one of these measures, see \eqref{mono} for a rigorous definition. These initial measures are natural in the context of hydrodynamic limits of interacting particle systems, see \cite{BerD-SGabJ-LLan}, \cite{claudios}. Assume that the reservoir rates at the boundary of the interval $\{1,\dots,n-1\}$ are equal. In that case the stationary state is explicit and it is a product measure. Let $D_n(t; \nu)$ be the total variation distance between the law at time $t$ of the process with initial measure $\nu$. We will prove that there exist positive constants $\lambda , \gamma$ such that
\begin{equation}
\label{gorila}
D_n \big(\tfrac{1}{2\lambda} n^2 \log n  + \tfrac{1}{\lambda} b n^2; \nu_0^n \big) = \mc G(\gamma e^{-b}),
\end{equation}
where the function $\mc G$ is the total variation distance between two Gaussians of unit variance, see \eqref{azapa}. In this limit we have chosen the normalisations to stress the universality of the function $\mc G$. The constant $\lambda$ is equal to the eigenvalue of the Dirichlet Laplacian in the interval associated with the first non-zero Fourier mode of the macroscopic density profile and the constant $\gamma$ depends on the corresponding Fourier coefficient. 

In \cite{lacoin circle} it is observed that the exclusion process on the circle starting from a \emph{typical} configuration does not present cut-off, and moreover the mixing time in that case is of order $\mc O(n^2)$. Observe that the constant in front of the leading order of the mixing time can be arbitrarily small, depending on how many Fourier modes of the initial profile vanish.
Therefore, our main result can be seen as an interpolation between the behavior of the mixing time starting from the worst possible configuration and starting from a typical configuration under the stationary measure.

The identification of the profile cut-off is known to be difficult. The heuristic derivation of the expressions appearing in \eqref{gorila} is not difficult to understand. The so-called \emph{relaxation time} describes the time it takes for the chain to forget the initial state when starting from a typical state. The relaxation time is equal to the inverse of the spectral gap $\lambda$. The so-called \emph{log-Sobolev constant} $K$ is another constant that can be used to describe the behavior of a Markov chain. In general we have the bound $K \leq \frac{\lambda}{2}$. It can be proved that the mixing time is bounded above by $- K^{-1}\log \min_\eta \mu(\eta)$, where $\mu$ is the stationary measure. If one believes that these bounds are sharp, one arrives exactly to the picture above, on which the distance to the stationary state goes from $0$ to $1$ in a window of size $\mc O(n^2)$, positioned at a time of order $\mc O(n^2 \log n)$. Most of the proofs of the cut-off behavior include a \emph{burn-in phase} of order $\mc O(\lambda^{-1})$, which serves to randomize the current configuration of the chain. If one is interested in the profile cut-off, this burn-in phase can be at most of size $o(\lambda^{-1})$, and at that size we know that the chain can not thermalize all the observables of the system. Therefore, in order to identify the cut-off profile, a deeper knowledge of the chain is necessary. 

Our aim in this work is to introduce a novel approach to the mixing time problem, which we call the \emph{relative entropy method}. This approach is based upon Yau's relative entropy method \cite{yau} used in the theory of hydrodynamic limits of interacting particle systems, see \cite[Chapter 6]{claudios}. An earlier entropy method, described in \cite{diaconislog}, derives upper and lower bounds on the mixing time in terms of the log-Sobolev constant. The idea is to estimate the relative entropy between the law of the process and the stationary state and to transport these estimates into total variation estimates by means of Pinsker's inequality. When available, this method usually gives bounds on the mixing time of the right order, but with non-matching constants.

Theorem \ref{t5} below implies in particular that the log-Sobolev constant of the chain $\{\eta_t \scn t \geq 0\}$ is bounded above by $K_{\LS}\big(\tfrac{1}{2},\rho\big)$, which implies an upper bound for the mixing time of the form $K_{\LS}\big(\tfrac{1}{2},\rho\big)  \log n + C$ for some $C$ large enough (recall here that time has already been scaled in a diffusive way). Wilson's method \cite{wilson} predicts that the mixing time is of order $\frac{1}{2\pi^2} \log n$. The constant $\frac{1}{2 \pi^2}$ is just the spectral gap of the Laplacian on the interval $[0,1]$ with Dirichlet boundary conditions. Observe that at the beginning of the evolution, the law of the chain is far from the invariant measure. Therefore, the relative entropy of the process with respect to the invariant measure is large, and the estimate using the log-Sobolev constant is not optimal. Our idea is to compare the law of the exclusion process at time $tn^2$ with the product measure $\nu_t^n$ associated to the solution of the hydrodynamic equation at time $t$. For these measures, Theorem \ref{t5} below provides a log-Sobolev inequality. If we assume that the law of the exclusion process is close to this product measure, then the log-Sobolev inequality would be more efficient. This strategy can be implemented as in \cite{mainlemma}, and it will give a sharp estimate for the relative entropy between the law of the process and this family of product measures. After that, the computation of the distance $D_n(t; \nu_0^n)$ is reduced to the computation of $\|\nu_t^n - \bar{\nu}_{\hspace{-1.5pt}\rho}^{\hspace{0.3pt}n}\|_{\TV}$. This last distance is simpler to compute, and full asymptotics can be readily obtained.

Whenever available, an effective way to prove the log-Sobolev is by means of \emph{comparison} of Dirichlet forms. Our proof of the log-Sobolev inequatily is based on a simple, albeit powerful observation: one can combine the Glauber dynamics at the reservoirs with the exclusion dynamics at the interior of the interval in order to compare the exclusion process with reservoirs with a Glauber dynamics that acts on each site of the interval. This comparison principle holds true even if the reservoirs are \emph{slow} \cite{baldasso, slow boundary}, see Theorem \ref{t5}. It is interesting to observe that our diffusive bounds on the log-Sobolev constant of the exclusion process in contact with reservoirs hold true exactly up to the same slow scale on which the reservoirs modify the hydrodynamic behavior of the system \cite{slow}.

The outline of this article is as follows. 
In Section \ref{s2}  we introduce the notation and we define the model under investigation. In Section \ref{s3} we present the proof of our main theorem, namely Theorem \ref{t1}. Section \ref{s4} is devoted to the proof of the log-Sobolev inequality which is our main tool to prove Theorem \ref{t1}. Section \ref{s5} is devoted to estimating the entropy production by using Yau's relative entropy method together with the log-Sobolev estimate. Finally, in Section \ref{s6} we compute the total variation distance between profile measures.   In the appendix we collected some results that were used along the proofs.

\section{Notation and main results}
\label{s2}
Let $n \in \{2,3,\dots,\}$ be a scaling parameter. Let $\Lambda_n:= \{1,\dots,n-1\}$ be the discrete interval with $n-1$ points. We will call the set $\{1,n-1\}$ the \emph{boundary} of $\Lambda_n$. We give to $\Lambda_n$ a graph structure by taking $E_n:= \{ \{x,x+1\} \scn x \in \{1,\dots,n-2\}\}$ as the set of edges in $\Lambda_n$. We call the vertex set $\Lambda_n$ the \emph{bulk} and we say that $x,y \in \Lambda_n$ are \emph{neighbors} if $\{x,y\} \in E_n$. In that case we write $x \sim y$. 

The set $\Omega_n := \{0,1\}^{\Lambda_n}$ is the state space of the Markov process described in the introduction, which we will rigorously define below. The elements $\eta = \{\eta(x) \scn x \in \Lambda_n\}$ of $\Omega_n$ are called \emph{configurations of particles}. We say that a vertex $x \in \Lambda_n$ is occupied by a particle (resp.~empty) in configuration $\eta \in \Omega_n$ if $\eta(x) =1$ (resp.~$\eta(x)=0$). 

Given a configuration $\eta \in \Omega_n$ and two vertices $x,y \in \Lambda_n$, we denote by $\eta^{x,y}$ the configuration of particles obtained from $\eta$ by exchanging the positions of the particles at $x$ and $y$, that is,
\[
\eta^{x,y}(z) = 
\left\{
\begin{array}{c@{\;\text{ if }\;}l}
\eta(x) & z =y,\\
\eta(y) & z=x,\\
\eta(z) & z \neq x,y.
\end{array}
\right.
\]
Given a configuration $\eta \in \Omega_n$ and a vertex $x \in \Lambda_n$, we denote by $\eta^{x}$ the configuration of particles obtained from $\eta$ by changing the value of $\eta(x)$ to $1-\eta(x)$, that is,
\[
\eta^{x}(z) = 
\left\{
\begin{array}{c@{\;\text{ if }\;}l}
1-\eta(x) & z =x,\\
\eta(z) & z \neq x.
\end{array}
\right.
\]
For $f: \Omega_n \to \bb R$ and $x,y \in \Lambda_n$, let $\nabla_{x,y} f, \nabla_x f: \Omega_n \to \bb R$ be defined as
\begin{equation}
\label{arica}
\nabla_{x,y} f(\eta) = f(\eta^{x,y})-f(\eta), \quad \nabla_x f(\eta) = f(\eta^x) -f(\eta)
\end{equation}
for any $\eta \in \Omega_n$. 

Let us fix a density $\rho \in (0,1)$. The symmetric simple exclusion process (SSEP) with reservoir density $\rho$ is the continuous-time Markov chain $\{\eta_t \scn t \geq 0\}$ with state space $\Omega_n$ and generated by the operator $\mf L_n$ given by
\[
\mf L_n f(\eta) := n^2 \sum_{x=1}^{n-2} \nabla_{x,x+1} f(\eta) + n^2 \!\!\!\!\!\! \sum_{x \in \{1,n-1\}} \!\!\!\!\! \big( \rho (1-\eta(x))+(1-\rho)\eta(x) \big) \nabla_x f(\eta)
\]
for any function $f: \Omega_n \to \bb R$ and any $\eta \in \Omega_n$. 
The factor $n^2$ speeds up time so that the process is observed in a diffusive time scale. 
Observe that the process $\{\eta_t \scn t \geq 0\}$ depends on $n$ and $\rho$. In order to simplify  notation, we do not make this dependence explicit in the notation. The same observation applies to the dependence in $\rho$ of $\mf L_n$, as well as to various other objects we define below.

The dynamics generated by $\mf L_n$ can be informally described as follows: in the bulk, particles perform nearest-neighbor random walks with rate $n^2$ under the \emph{exclusion} rule which forbids more than one particle at any vertex and at any time. At the boundary, Glauber dynamics inject (resp.~annihilate) particles independently at each empty (resp.~occupied) vertex in $\{1,n-1\}$ with rate $\rho n^2$ (resp.~$(1-\rho)n^2$). Here the  factor $n^2$ means that the Glauber dynamics  has the same intensity as the exchange dynamics acting on the whole system. 

For each function $u: \Lambda_n \to [0,1]$, let $\nu_{u(\cdot)}^n$ the Bernoulli product measure in $\Omega_n$ with density $u(\cdot)$, that is,
\[
\nu_{u(\cdot)}^n(\eta) := \prod_{x \in \Lambda_n} \big\{ \eta(x) u(x) + (1-\eta(x)) (1-u(x))\big\}
\]
for any $\eta \in \Omega_n$. Observe that when the values of $u$ belong to the \emph{open} interval $(0,1)$, the measures $\nu_{u(\cdot)}^n$ have full support. 

Since the process $\{\eta_t \scn t \geq 0\}$ is irreducible and has a finite state space, then it has a unique invariant measure, which turns out to be the  Bernoulli product measure $\bar{\nu}_{\hspace{-1.5pt}\rho}^{\hspace{0.3pt}n}$ associated with the constant function equal to $\rho$.

For each function $u: [0,1] \to [0,1]$, let $u^n: \Lambda_n \to [0,1]$ be defined by $u^n(x) := u\big( \tfrac{x}{n}\big)$ for every $x \in \Lambda_n$. 
For $\eps_0 \in (0,\min\{\rho,1-\rho\}]$ and $\kappa \geq 0$, let $\mc U_{\eps_0,\kappa}$ be the family of differentiable functions $u: [0,1] \to [0,1]$ such that:
\begin{itemize}

\item $\eps_0 \leq u(x) \leq 1-\eps_0$ for every $x \in [0,1]$;

\item $u(0) = u(1) = \rho$;

\item $|u'(x)| \leq \kappa$ for every $x \in [0,1]$.

\end{itemize}

Fix $\eps_0 \in (0,\min\{\rho,1-\rho\}]$ and $\kappa >0$ and let $u_0 \in \mc U_{\eps_0,\kappa}$. We call $u_0$ a \emph{profile} and we call the measures $\{\nu_{u^n_0(\cdot)}^n \scn n \in \{2,3,\dots\}\}$ the \emph{profile measures}. From now on, we consider the process $\{\eta_t \scn t \geq 0\}$ with initial distribution $\nu_{u^n_0(\cdot)}^n$. In order to simplify the notation, we define 
\begin{equation}
\label{mono}
\nu^n_0:= \nu_{u_0^n(\cdot)}^n.
\end{equation}

Let $D([0,\infty), \Omega_n)$ be the space of c\`adl\`ag trajectories in $\Omega_n$. We denote by $\bb P_{\nu^n_0}$ the probability measure in $D([0,\infty), \Omega_n)$ induced by the Markov process $\{\eta_t \scn t \geq 0\}$ with initial measure $\nu^n_0$. We denote by $\bb E_{\nu^n_0}$ the expectation with respect to $\bb P_{\nu^n_0}$. We denote the law of $\eta_t$ with respect to $\bb P_{\nu^n_0}$ by $\mu_t^n$.

The distance to equilibrium of the process $\{\eta_t \scn t \geq 0\}$ with initial measure $\nu^n_0$ is defined as 
\[
D_n(t; \nu^n_0):= \|\mu_t^n - \bar{\nu}_{\hspace{-1.5pt}\rho}^{\hspace{0.3pt}n}\|_{\TV},
\]
where $\|\mu -\nu \|_{\TV}$ stands for the total variation distance between the probability measures $\mu$ and $\nu$ in $\Omega_n$. 

In order to state our main result, we need to introduce the Fourier coefficients of the initial profile $u_0-\rho$. For $u_0: [0,1] \to [0,1]$ and $\ell \in \bb N$, define
\[
c_\ell(u_0) := \sqrt{2} \int (u_0(x)-\rho) \sin(\pi  \ell x) dx.
\]
We also need to define the \emph{Gaussian profile} $\mc G: \bb R \to [0,1]$ as
\begin{equation}
\label{azapa}
\mc G(m) := \|\mc N(m,1) - \mc N(0,1)\|_{\TV}= \tfrac{1}{2} \bb E \big[ \big| e^{m X -\tfrac{m^2}{2}}-1\big|\big],
\end{equation}
where $X$ is a random variable distributed as $\mc N(0,1)$.
Our main result is the following:

\begin{theorem}
\label{t1} Let $u_0:[0,1] \to [0,1]$ be differentiable. Assume that $u_0(0) = u_0(1) = \rho$ and that $u_0(x) \in (0,1)$ for every $x \in [0,1]$. Let $\ell_0 \in \bb N$ be the smallest integer such that $c_{\ell_0}(u_0) \neq 0$. For every $b \in \bb R$,
\[
\lim_{n \to \infty} D_n\big(t^n(b); \nu_0^n\big) = \mc G(\gamma e^{-b}),
\]
where 
\[
t^n(b):=  \frac{1}{2 \pi^2 \ell_0^2} \log n + \frac{b}{\pi^2 \ell_0^2} 
\text{ and } \gamma := \frac{|c_{\ell_0}(u_0)|}{\sqrt{ \rho(1-\rho)}}.
\]
\end{theorem}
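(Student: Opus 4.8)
The plan is to compare the law $\mu^n_t$ of the process with the Bernoulli product measure $\nu^n_t:=\nu^n_{u^n(t,\cdot)}$, where $u=u(t,x)$ solves the hydrodynamic equation, namely the heat equation $\p_t u=\p_{xx}u$ on $[0,1]$ with Dirichlet data $u(t,0)=u(t,1)=\rho$ and initial datum $u_0$; thus $\nu^n_t$ is the product measure of density $x\mapsto u(t,x/n)$ on $\Lambda_n$. Since $u_0-\rho$ vanishes at the endpoints one has $u(t,x)-\rho=\sum_{\ell\ge1}c_\ell(u_0)\,e^{-\pi^2\ell^2t}\sqrt2\sin(\pi\ell x)$, and the maximum principle keeps $u(t,\cdot)\in[\eps_0,1-\eps_0]$ with $\|\p_x u(t,\cdot)\|_\infty\le\kappa$ for all $t\ge0$; at $t=t^n(b)$ the $\ell_0$-th mode has amplitude $c_{\ell_0}(u_0)\,n^{-1/2}e^{-b}$ and dominates the others. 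By the triangle inequality
\[
\big|\,D_n\big(t^n(b);\nu_0^n\big)-\big\|\nu^n_{t^n(b)}-\bar\nu^n_\rho\big\|_{\TV}\,\big|\ \le\ \big\|\mu^n_{t^n(b)}-\nu^n_{t^n(b)}\big\|_{\TV},
\]
so it suffices to show (A) $\big\|\mu^n_{t^n(b)}-\nu^n_{t^n(b)}\big\|_{\TV}\to0$ and (B) $\big\|\nu^n_{t^n(b)}-\bar\nu^n_\rho\big\|_{\TV}\to\mc G(\gamma e^{-b})$, the latter being the object computed in Section~\ref{s6}.

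For (B) I would study the log-likelihood ratio $L_n:=\log\frac{d\nu^n_{t^n(b)}}{d\bar\nu^n_\rho}=\sum_{x\in\Lambda_n}\big(\eta(x)\log\frac{u^n(t^n(b),x)}{\rho}+(1-\eta(x))\log\frac{1-u^n(t^n(b),x)}{1-\rho}\big)$, a sum of independent bounded random variables under $\bar\nu^n_\rho$ whose ``increments'' $\delta_x:=u(t^n(b),x/n)-\rho$ are uniformly of order $n^{-1/2}$. A Taylor expansion gives $\bb E_{\bar\nu^n_\rho}[L_n]=-\tfrac12\sum_x\tfrac{\delta_x^2}{\rho(1-\rho)}+o(1)$ and $\mathrm{Var}_{\bar\nu^n_\rho}(L_n)=\sum_x\tfrac{\delta_x^2}{\rho(1-\rho)}+o(1)$, and a direct computation with the dominant Fourier mode yields $\sum_x\delta_x^2\to c_{\ell_0}(u_0)^2e^{-2b}$, so both converge with $\sigma^2:=\gamma^2e^{-2b}$; the Lindeberg central limit theorem then gives $L_n\Rightarrow\mc N(-\tfrac12\sigma^2,\sigma^2)$. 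The exact identity $\bb E_{\bar\nu^n_\rho}\big[e^{2L_n}\big]=\prod_{x}\big(1+\tfrac{\delta_x^2}{\rho(1-\rho)}\big)\le\exp\big(\sum_x\tfrac{\delta_x^2}{\rho(1-\rho)}\big)$ is bounded, hence $(e^{L_n})_n$ is uniformly integrable and
\[
\big\|\nu^n_{t^n(b)}-\bar\nu^n_\rho\big\|_{\TV}=\tfrac12\bb E_{\bar\nu^n_\rho}\big[\,|e^{L_n}-1|\,\big]\ \longrightarrow\ \tfrac12\bb E\big[\,\big|e^{\sigma X-\sigma^2/2}-1\big|\,\big]=\mc G(\gamma e^{-b}),
\]
which is \eqref{azapa} evaluated at $m=\gamma e^{-b}$.

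For (A) I would use Yau's relative entropy method. Writing $f^n_t:=d\mu^n_t/d\nu^n_t$ and $H_n(t):=H(\mu^n_t\,|\,\nu^n_t)$ for the relative entropy, note $H_n(0)=0$ because $\mu^n_0=\nu^n_0$. The entropy production estimate (as in \cite[Ch.~6]{claudios} and \cite{mainlemma}) gives
\[
\p_t H_n(t)\ \le\ -\,\mc E_n\big(\sqrt{f^n_t}\,;\nu^n_t\big)\ +\ \int_{\Omega_n}W^n_t\,\big(f^n_t-1\big)\,d\nu^n_t,
\]
where $\mc E_n(\,\cdot\,;\nu^n_t)$ is the (diffusively scaled, hence carrying the factor $n^2$) Dirichlet form of the symmetric part of $\mf L_n$ with respect to $\nu^n_t$, and $W^n_t=\tfrac{1}{\nu^n_t}\mf L_n^{*}\nu^n_t-\p_t\log\nu^n_t$ satisfies $\int W^n_t\,d\nu^n_t=0$. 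Because $u$ solves the hydrodynamic equation and $u(t,0)=u(t,1)=\rho$ matches the reservoir density (so that the boundary Glauber part of $\nu^n_t$ agrees with that of $\mf L_n$), a discrete Taylor expansion of $W^n_t$ combined with a summation by parts shows that the first-order term cancels and $W^n_t=\sum_{x\in\Lambda_n}g_{t,x}$ is a sum of local functions with $\|g_{t,x}\|_\infty=O\big((\p_x u(t,x/n))^2\big)$, up to discretization remainders of higher order in $n^{-1}$. The error term is then handled by the integration by parts of Yau's method: solving, for each $x$, a local Poisson equation against the speeded-up local dynamics produces a corrector of sup norm $O\big(n^{-2}\|g_{t,x}\|_\infty\big)$, so that
\[
\int_{\Omega_n}W^n_t\,\big(f^n_t-1\big)\,d\nu^n_t\ \le\ \tfrac12\,\mc E_n\big(\sqrt{f^n_t}\,;\nu^n_t\big)\ +\ \frac{C}{n^2}\sum_{x\in\Lambda_n}\|g_{t,x}\|_\infty^2\ \le\ \tfrac12\,\mc E_n\big(\sqrt{f^n_t}\,;\nu^n_t\big)\ +\ \frac{C\kappa^4}{n},
\]
using $\sum_x\|g_{t,x}\|_\infty^2\le Cn\|\p_x u(t,\cdot)\|_{L^4}^4\le C\kappa^4n$. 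Inserting the log-Sobolev inequality of Theorem~\ref{t5}, $\mc E_n\big(\sqrt{f^n_t}\,;\nu^n_t\big)\ge K_{\LS}\big(\tfrac12,\rho\big)^{-1}H_n(t)$, gives $\p_t H_n(t)\le-\tfrac12K_{\LS}\big(\tfrac12,\rho\big)^{-1}H_n(t)+C\kappa^4 n^{-1}$, and Grönwall's inequality together with $H_n(0)=0$ yields $H_n(t)\le C\,K_{\LS}\big(\tfrac12,\rho\big)\,\kappa^4\,n^{-1}$ for every $t\ge0$; in particular $H_n\big(t^n(b)\big)=O(n^{-1})$. Pinsker's inequality $\big\|\mu^n_{t^n(b)}-\nu^n_{t^n(b)}\big\|_{\TV}\le\sqrt{\tfrac12H_n(t^n(b))}$ then gives (A), and the theorem follows.

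The technical heart, and the main obstacle, lies in the error estimate for (A): first, establishing the cancellation that reduces $W^n_t$ to a sum of local functions of size $(\p_x u)^2$ — this is exactly where the hydrodynamic equation and the boundary condition $u_0(0)=u_0(1)=\rho$ are used, and it requires careful discrete Taylor expansions, a summation by parts, and control of the non-reversible exclusion correction, the boundary Glauber terms, and the discretization remainders; and second, running the integration by parts so as to gain the factor $n^{-2}$ uniformly in $t$ and $n$, which is precisely the step supplied by the log-Sobolev inequality of Theorem~\ref{t5}. One must also record the elementary heat-semigroup bounds on the Sobolev norms of $u(t,\cdot)$ entering the remainder, and, for (B), carry out the Lindeberg CLT and the uniform-integrability bound; these last points are comparatively routine.
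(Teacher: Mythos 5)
Your overall architecture — triangle inequality to reduce $D_n(t^n(b);\nu_0^n)$ to $\|\nu^n_{t^n(b)}-\bar\nu^n_\rho\|_{\TV}$ plus a relative entropy bound via Yau's method and the log-Sobolev inequality, then a CLT-with-uniform-integrability computation of $\|\nu^n_t-\bar\nu^n_\rho\|_{\TV}$ — is exactly the paper's strategy, and your treatment of part (B) is essentially the paper's Section~\ref{s6}.

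Part (A), however, contains a genuine gap, and it is at the step you yourself flag as the heart of the matter. After absorbing $\tfrac12\mc E_n(\sqrt{f^n_t};\nu^n_t)$, the cost of Yau's integration by parts applied to a sum of $n-2$ local terms is $\tfrac{C}{n}\sum_x\|g_{t,x}\|_\infty^2$, not $\tfrac{C}{n^2}\sum_x\|g_{t,x}\|_\infty^2$: the Dirichlet form carries the diffusive factor $n^2$, but you can only give each of the $\sim n$ bonds a $\mc O(n^{-1})$-fraction of it, so Young's inequality returns $\tfrac{C}{n^2}\cdot n\cdot\|g_{t,x}\|_\infty^2$ per bond. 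With $\|g_{t,x}\|_\infty = \mc O(\kappa^2)$ this sums to $\mc O(\kappa^4)$, not $\mc O(\kappa^4/n)$, and Grönwall only gives the uniform bound $H_n(t)\le C\kappa^4$. This matches the paper's \eqref{domeyko}, which is $\mc O(1)$ uniformly in $t$ — it does \emph{not} tend to zero. Your claimed $H_n(t)\le Cn^{-1}$ therefore does not follow, and (A) is left unproved. (The use of the continuous heat equation instead of the discrete one $\eqref{antofagasta}$ is a secondary issue: it makes the first-order cancellation in $W^n_t$ only approximate up to discretization errors you don't estimate; the paper sidesteps this by taking $u^n_t(x)=\bb E_{\nu^n_0}[\eta_t(x)]$, for which the cancellation is exact.)

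The missing idea is that the prefactor $n^2\big(u^n_t(x+1)-u^n_t(x)\big)^2$ in $W^n_t$ is not merely $\mc O(\kappa^2)$ but decays exponentially in time: by the heat-semigroup estimate of Lemma~\ref{la1}, $n|u^n_t(x+1)-u^n_t(x)|\le 8\pi e^{-\lambda_1^n t}$ for $t\ge\lambda_1^{-n}\log 2$, with $\lambda_1^n\ge\tfrac{3\pi^2}{4}$. Plugging this into \eqref{huasco} turns the error term into $Ce^{-2\lambda_1^n t}$, and integrating the resulting differential inequality yields $H_n(t)\le C_0 e^{-\delta_0 t}$ (Theorem~\ref{t2}). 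It is this exponential-in-$t$ decay, evaluated at $t=t^n(b)\sim\tfrac{1}{2\pi^2\ell_0^2}\log n$, that gives $H_n(t^n(b))\to 0$. Without it, the argument stalls at a bound that is uniformly bounded but not small, and Pinsker's inequality gives no information. You should replace the erroneous $\mc O(n^{-1})$ claim with the gradient-decay estimate and the two-point correlation bound $|\bb E_{\nu^n_0}[\omega_x\omega_{x+1}]|\le\kappa^2/(\eps_0^2 n)$ from Proposition~\ref{plmo}, which is what actually closes the Grönwall loop.
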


\begin{remark}
Under the conditions on the profile $u_0$ stated on this theorem, there exist $\eps_0>0$ and $\kappa$ finite such that $u_0 \in \mc U_{\eps_0,\kappa}$.
\end{remark}

\section{Proof of Theorem \ref{t1}}\label{s3}

In this section we explain the strategy  we used to prove Theorem \ref{t1}. We start recalling the definition of relative entropy and Pinsker's inequality. Let $\nu$ be a probability measure in $\Omega_n$ and let $f$ be a density with respect to $\nu$. The \emph{relative entropy} of $f$ with respect to $\nu$ is defined as
\[
H_\nu (f) := \int f \log f d \nu.
\]
Relative entropy and total variation are related by Pinsker's inequality:

\begin{proposition}(Pinsker's inequality)
\label{p1}
Let $\mu$ and $\nu$ be two probability measures in $\Omega_n$. Let $f$ be the Radon-Nikodym derivative of $\mu$ with respect to $\nu$. Then $2\|\mu - \nu\|_{\TV}^2 \leq H_\nu(f)$.
\end{proposition}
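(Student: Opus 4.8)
The plan is to deduce the inequality from the explicit integral representations of the two quantities, glued together by a single elementary pointwise estimate and one application of Cauchy--Schwarz. First I would record the two identities that follow directly from the definitions. Since $f$ is a density with respect to $\nu$ we have $\int f\, d\nu = 1$, hence
\[
H_\nu(f) = \int f \log f\, d\nu = \int \big( f\log f - f + 1\big)\, d\nu ,
\]
and, on the finite space $\Omega_n$, the coupling characterization of total variation gives $\|\mu - \nu\|_{\TV} = \tfrac12 \int |f-1|\, d\nu$ (the optimal set being $\{f \geq 1\}$). So the statement $2\|\mu-\nu\|_{\TV}^2 \le H_\nu(f)$ is equivalent to the bound $\big(\int |f-1|\, d\nu\big)^2 \le 2 H_\nu(f)$, which is what I would prove.

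The analytic core is the pointwise inequality
\[
3(x-1)^2 \;\le\; (2x+4)\,\big( x\log x - x + 1\big) \qquad \text{for all } x \geq 0 ,
\]
which I would check by elementary calculus. Writing $g(x)$ for the difference (right-hand side minus left-hand side), one computes $g(1) = g'(1) = g''(1) = 0$, while $g'''(x) = 4(x-1)/x^2$ is negative on $(0,1)$ and positive on $(1,\infty)$; hence $g''$ attains its minimum value $0$ at $x = 1$, so $g$ is convex with global minimum $0$ at $x = 1$, and therefore $g \geq 0$. (One also checks $g(0) = 1 > 0$, consistently.)

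With this in hand I would apply Cauchy--Schwarz with the weight $2f + 4$:
\[
\Big( \int |f-1|\, d\nu \Big)^2
= \Big( \int \frac{|f-1|}{\sqrt{2f+4}}\,\sqrt{2f+4}\, d\nu \Big)^2
\le \int \frac{(f-1)^2}{2f+4}\, d\nu \, \cdot \int (2f+4)\, d\nu .
\]
The second factor equals $6$ since $\int f\, d\nu = 1$; by the pointwise inequality the first factor is at most $\tfrac13 \int \big( f\log f - f + 1\big)\, d\nu = \tfrac13 H_\nu(f)$. Multiplying yields $\big(\int |f-1|\, d\nu\big)^2 \le 2 H_\nu(f)$, and dividing by $4$ gives $2\|\mu-\nu\|_{\TV}^2 \le H_\nu(f)$.

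I do not anticipate a real obstacle: this is a classical estimate, and the only non-bookkeeping ingredient is the choice of the weight $2x + 4$ in the Cauchy--Schwarz step (equivalently the precise pointwise inequality above), after which the rest is forced. If one prefers to avoid the scalar inequality, an alternative is the standard reduction to two points: push $\mu$ and $\nu$ forward under the map sending $\eta$ to $1$ if $f(\eta) \geq 1$ and to $0$ otherwise; total variation is preserved while relative entropy can only decrease, so the claim reduces to the Bernoulli case $p\log\tfrac{p}{q} + (1-p)\log\tfrac{1-p}{1-q} \ge 2(p-q)^2$, again a one-variable calculus exercise. I would keep the first route, as it is self-contained and the pointwise inequality is quick to verify.
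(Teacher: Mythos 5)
The paper states Pinsker's inequality as a proposition without supplying a proof, treating it as a classical fact; so there is no paper proof to compare against. Your self-contained argument is correct and is one of the standard derivations: the pointwise inequality $3(x-1)^2 \le (2x+4)(x\log x - x + 1)$ is exactly the scalar lemma behind the weighted Cauchy--Schwarz route (the computations $g(1)=g'(1)=g''(1)=0$ and $g'''(x)=4(x-1)/x^2$ are right, giving $g''\ge 0$ and hence $g\ge 0$), and the normalization then yields $\big(\int|f-1|\,d\nu\big)^2 \le \tfrac13 H_\nu(f)\cdot 6 = 2H_\nu(f)$, i.e.\ the claim. The alternative reduction to the two-point/Bernoulli case that you sketch at the end is equally valid. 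Either way, your write-up supplies a complete proof of a result the paper merely cites, which is perfectly acceptable.
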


We say that a probability measure $\nu$ in $\Omega_n$ is a \emph{reference measure} if $\nu(\eta) > 0$ for every $\eta \in \Omega_n$. Let $\nu$ be a reference measure, which may or may not depend on time, and let $f_t^n$  be the Radon-Nikodym derivative of $\mu_t^n$ with respect to $\nu$. By the triangle's inequality,
\begin{equation}
\label{iquique}
\big| D_n(t; \nu^n_0) - \|\nu - \bar{\nu}_{\hspace{-1.5pt}\rho}^{\hspace{0.3pt}n}\|_{\TV} \big| \leq \|\mu_t^n - \nu\|_{\TV} \leq \sqrt{\frac{H_\nu(f_t^n)}{2}}.
\end{equation}

Therefore, if we are able to find some reference measures $\nu^n_t$ for which $H_{\nu^n_t}(f_t^n)^{1/2}$ converges to $0$ faster than $D_n(t; \nu^n_0)$, then the proof of Theorem \ref{t1} is reduced to the computation of the distance $\|\nu^n_t - \bar{\nu}_{\hspace{-1.5pt}\rho}^{\hspace{0.3pt}n}\|_{\TV}$. 

Now we explain our choice for the reference measure $\nu^n_t$. Recall that we fixed constants $\eps_0 \in (0,\min\{\rho,1-\rho\}]$ and $\kappa >0$ and a profile $u_0 \in \mc U_{\eps_0,\kappa}$. For each $t \geq 0$ let us define $u_t^n: \{0,1,\dots,n\} \to [0,1]$ as
\[
u_t^n(x) :=
\left\{
\begin{array}{c@{\;\text{ if }\;}l}
\bb E_{\nu^n_0}[\eta_t(x)] & x \in \Lambda_n,\\
\rho & x \in \{0,n\}.
\end{array}
\right.
\]
Using Dynkin's formula,  it can be shown that $\{u_t^n \scn t \geq 0\}$ is the unique solution of the boundary-value problem
\begin{equation}
\label{antofagasta}
\left\{
\begin{array}{r@{\;=\;}l@{\;\text{ for }\;} l}
\tfrac{d}{dt} u_t^n(x) & \Delta_n u_t^n(x) & t \geq 0 \text{ and } x \in \Lambda_n,\\
u_t^n(x) & \rho & t \geq 0 \text{ and } x \in \{0,n\}, \\
u_0^n(x) & u_0\big(\tfrac{x}{n}\big) & x \in \Lambda_n. 
\end{array} 
\right.
\end{equation}
Here $\Delta_n$ is the discrete Laplacian operator defined on functions $f: \{0,1,\dots,n\} \to \bb R$ as
\[
\Delta_n f(x) = n^2 \big(f(x+1)+f(x-1)-2f(x)\big) 
\]
for any $x \in \Lambda_n$. 

Define $\nu^n_t : = \nu_{u_t^n(\cdot)}$. The property known as \emph{conservation of local equilibrium} \cite[Chp.~9]{claudios} states that for any $t \geq 0$ fixed, the measures $\mu_t^n$ and $\nu_t^n$ are close as $n \to \infty$ if observed on an interval on $\Lambda_n$ of fixed size. Therefore, it is reasonable to use the measures $\{\nu_t^n \scn t \geq 0\}$ as the reference measures to be plugged in \eqref{iquique}. Observe, however, that this conservation of local equilibrium is too weak to be useful in our situation: first, it only holds over a \emph{finite} time interval (we need to go to times of order $\mc O(\log n)$), and second it only holds on a \emph{finite} spatial interval (we need to go to the whole interval $\Lambda_n$). In Section \ref{s5} we will prove the following bound on the aforementioned relative entropy:

\begin{theorem}
\label{t2}
Let $\eps_0 \in (0,\min\{\rho,1-\rho\}]$ and $\kappa \geq 0$ be given. Let $u_0 \in \mc U_{\eps_0,\kappa}$ and let $f_t^n$ be the Radon-Nikodym derivative of the measure $\mu_t^n$ with respect to $\nu_t^n$. Define $H_n(t) := H_{\nu_t^n}(f_t)$. There exist constants $C_0=C_0(\eps_0,\kappa)$, $\delta_0=\delta_0(\eps_0,\kappa) >0$ such that
\[
H_n(t) \leq C_0 e^{-\delta_0 t}
\]
for every $n \in \{2,3,\dots\}$, every $u_0 \in \mc U_{\eps_0,\kappa}$ and every $t \geq 0$.
\end{theorem}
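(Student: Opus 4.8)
The proof is an instance of Yau's relative entropy method \cite{yau}, \cite[Chapter~6]{claudios}, organized as in \cite{mainlemma}: we derive a Gr\"onwall inequality for $H_n(t)$ in which the entropy production dominates a multiple of $H_n(t)$ itself --- via the log-Sobolev inequality of Theorem~\ref{t5} --- while the remaining error terms decay in $t$ at a rate independent of $n$. Since $\mu_0^n$ and the reference measure $\nu_0^n$ are the same product measure, $H_n(0)=0$, so $H_n$ grows only through these error terms and is then driven back down by the log-Sobolev term.

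Writing $f_t=f_t^n$, $\nu_t=\nu_t^n$ and $\mf L_n^\dagger$ for the adjoint of $\mf L_n$ with respect to the counting measure, the forward equation $\partial_t\mu_t^n=\mf L_n^\dagger\mu_t^n$ together with the elementary inequality $a(\log b-\log a)\le 2(\sqrt{ab}-a)$, applied to each jump of the dynamics, yields
\[
\frac{d}{dt}H_n(t)\;\le\;-\mc D_{\nu_t}\!\big(\sqrt{f_t}\big)\;+\;\int\Gamma_t^n\,f_t\,d\nu_t,\qquad\text{with}\quad \Gamma_t^n:=\frac{\mf L_n^\dagger\nu_t-\partial_t\nu_t}{\nu_t},
\]
where $\mc D_{\nu_t}$ is the symmetrized Dirichlet form of $\mf L_n$, namely $n^2$ times the sum of the bulk exchange form and the boundary Glauber form. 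The function $\Gamma_t^n$ measures the failure of the product measure $\nu_t$ to solve the forward equation, and would vanish if $\nu_t$ were the true law. Computing $\mf L_n^\dagger\nu_t/\nu_t$ explicitly, its part linear in the centered variables $\eta(x)-u_t^n(x)$ equals $\sum_x (\eta(x)-u_t^n(x))\,\Delta_n u_t^n(x)/(u_t^n(x)(1-u_t^n(x)))$ --- the boundary Glauber term being precisely what turns the one-sided bulk differences at $x\in\{1,n-1\}$ into the full discrete Laplacian with Dirichlet datum $\rho$ --- and since $u_t^n$ solves \eqref{antofagasta} this linear part cancels identically with $\partial_t\nu_t/\nu_t$ (Yau's correction). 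What remains in $\Gamma_t^n$ is a sum over bonds of $\nu_t$-centered terms, quadratic and higher order in the $\eta(x)-u_t^n(x)$, with deterministic coefficients of size $\mc O\big((n(u_t^n(x+1)-u_t^n(x)))^2\big)=\mc O(\kappa^2)$ for the quadratic part and $\mc O(n^{-1})$ per bond for the higher-order part.

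The crux is the estimate of $\int\Gamma_t^n\,f_t\,d\nu_t$, where a naive bound would lose a factor $\mc O(n)$ from summing over the $\mc O(n)$ bonds. Two facts prevent this: first, $\int(\eta(x)-u_t^n(x))\,f_t\,d\nu_t=\bb E_{\mu_t^n}[\eta_t(x)]-u_t^n(x)=0$ by the definition of $u_t^n$, so every contribution of $\Gamma_t^n$ linear in occupation variables drops out; second, $\Gamma_t^n$ is $\nu_t$-centered, so its deterministic part drops out too. One is then left with $\int\Gamma_t^n\,f_t\,d\nu_t=\sum_x\theta_x(t)\,\mathrm{Cov}_{\mu_t^n}(\eta_t(x),\eta_t(x+1))$ plus a remainder bounded by $C\,\|n\nabla u_t^n\|_\infty^3$, where $|\theta_x(t)|\le C(\eps_0)\,\|n\nabla u_t^n\|_\infty^2$. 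Two standard inputs close the loop. (i) For the SSEP the pair correlation $\varphi_t^n(x,y):=\mathrm{Cov}_{\mu_t^n}(\eta_t(x),\eta_t(y))$, $x<y$, solves a closed linear equation --- a discrete heat equation on the triangle $\{1\le x<y\le n-1\}$ with zero initial data (since $\mu_0^n=\nu_0^n$ is product), absorbing boundary conditions at $x=0$ and $y=n$, and a source concentrated along the diagonal $y=x+1$ --- from which $\sup_{x<y}|\varphi_t^n(x,y)|\le C(\eps_0,\kappa)/n$. (ii) By the maximum principle $u_t^n$ stays in $[\eps_0,1-\eps_0]$, and diagonalizing $\Delta_n$ with Dirichlet boundary conditions on $\{0,\dots,n\}$ gives $\|u_t^n-\rho\|_\infty+\|n\nabla u_t^n\|_\infty\le C(\eps_0,\kappa)\,e^{-\delta_1 t}$ for a spectral-gap constant $\delta_1=\delta_1(\eps_0,\kappa)>0$ bounded below uniformly in $n$. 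Since $\sum_x|\theta_x(t)|\le C\,n\,\|n\nabla u_t^n\|_\infty^2$, combining (i) and (ii) gives $\big|\int\Gamma_t^n\,f_t\,d\nu_t\big|\le C(\eps_0,\kappa)\,e^{-2\delta_1 t}$.

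By the log-Sobolev inequality of Theorem~\ref{t5}, which holds for each product measure $\nu_t$ with a constant independent of $n$ and $t$ (again because $u_t^n\in[\eps_0,1-\eps_0]$), $\mc D_{\nu_t}(\sqrt{f_t})\ge K_{\LS}(\tfrac12,\rho)^{-1}H_n(t)$, so
\[
\frac{d}{dt}H_n(t)\;\le\;-K_{\LS}(\tfrac12,\rho)^{-1}\,H_n(t)+C(\eps_0,\kappa)\,e^{-2\delta_1 t}.
\]
As $H_n(0)=0$, Gr\"onwall's lemma yields $H_n(t)\le C_0\,e^{-\delta_0 t}$ for some $\delta_0>0$ and $C_0$ depending only on $\eps_0$ and $\kappa$, which is the claim. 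The main obstacle is the third paragraph: making Yau's cancellation rigorous, reducing the error term to the pair-correlation form (where $\bb E_{\mu_t^n}[\eta_t(x)]=u_t^n(x)$ and the centering of $\Gamma_t^n$ are precisely what avert the fatal $\mc O(n)$ loss), and establishing the $n$-uniform a priori bound $|\varphi_t^n(x,y)|\le C/n$ from the closed pair-correlation equation.
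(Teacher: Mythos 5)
Your proposal follows essentially the same route as the paper: Yau's inequality (Proposition~\ref{p2}), the log-Sobolev bound of Theorem~\ref{t5}, the $\mathcal O(n^{-1})$ two-point correlation estimate from \cite[Lemma 4.1 and Proposition 4.4]{lmo} (which is the closed-equation argument you sketch), the exponential decay of $n\nabla u_t^n$ (Lemma~\ref{la1}), and a Gr\"onwall estimate. The only imprecision worth flagging is that for SSEP the Yau error term $\mf L_{n,t}^*\mathds 1-\partial_t\log\psi_t^n$ computes \emph{exactly} to $-\sum_x n^2(u_t^n(x{+}1)-u_t^n(x))^2\,\omega_x\omega_{x+1}$ --- the ratios $\nu_t(\eta^{x,x+1})/\nu_t(\eta)$ depend only on $(\eta(x),\eta(x+1))\in\{0,1\}^2$, so no cubic-or-higher term can appear --- and the paper closes the Gr\"onwall loop in two stages (a uniform-in-$t$ bound via Proposition~\ref{plmo}, then decay for $t\ge t_0^n$ via Lemma~\ref{la1}) rather than directly from $H_n(0)=0$; both variants work.
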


A version of this estimate was obtained in  \cite{mainlemma} in the context of non-equilibrium fluctuations from the hydrodynamic limit. Our novelty is the exponential decay as a function of $t$, which in particular allows to use this estimate over \emph{divergent} time windows.

The following theorem identifies the time window at which the convergence in total variation of $\nu_t^n$ to $\bar \nu_\rho^n$ happens. Recall the definition of the function $\mc G$ given in \eqref{azapa} and recall the definition of $\gamma$ given in Theorem \ref{t1}.
In Section \ref{s6} we prove the following result:

\begin{theorem}
\label{t3}
Under the conditions of Theorem \ref{t1}, for every $b \in \bb R$,
\[
\lim_{n \to \infty} \| \nu^n_{t^n(b)} - \bar{\nu}_{\hspace{-1.5pt}\rho}^{\hspace{0.3pt}n} \|_{\TV}
		= \mc G(\gamma e^{-b}).
\]
\end{theorem}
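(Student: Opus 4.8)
The plan is to reduce the total variation distance between the two product measures $\nu^n_{t^n(b)}$ and $\bar\nu^n_\rho$ to a central-limit statement for a single linear statistic, and then to identify the Gaussian limit explicitly. Write $v^n_t := u^n_t - \rho$ for the centered discrete profile. Since both measures are Bernoulli products, the Radon--Nikodym derivative $\tfrac{d\nu^n_t}{d\bar\nu^n_\rho}$ factorizes, and its logarithm is a sum over $x \in \Lambda_n$ of terms of the form $\log\tfrac{u^n_t(x)}{\rho}$ on occupied sites and $\log\tfrac{1-u^n_t(x)}{1-\rho}$ on empty sites. First I would Taylor-expand each factor: because $\|v^n_t\|_\infty \to 0$ on the relevant time scale (this follows from \eqref{antofagasta} together with eigenfunction decay — see below), the log-likelihood ratio is, up to a negligible error, the linear statistic
\[
W_n := \frac{1}{\rho(1-\rho)} \sum_{x \in \Lambda_n} v^n_{t^n(b)}(x)\,\big(\eta(x)-\rho\big),
\]
with a deterministic drift correction of size $-\tfrac{1}{2}\mathrm{Var}(W_n)$ coming from the second-order term. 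Under $\bar\nu^n_\rho$ the variables $\eta(x)-\rho$ are i.i.d., centered, with variance $\rho(1-\rho)$, so $\mathrm{Var}_{\bar\nu^n_\rho}(W_n) = \tfrac{1}{\rho(1-\rho)}\sum_x v^n_{t^n(b)}(x)^2$.

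The key computation is the asymptotics of $\sigma_n^2 := \tfrac{1}{\rho(1-\rho)}\sum_{x\in\Lambda_n} v^n_{t^n(b)}(x)^2$. Solving \eqref{antofagasta} by discrete Fourier series on $\{0,\dots,n\}$ with Dirichlet boundary data, $v^n_t(x) = \sum_{\ell\geq 1} c^n_\ell\, e^{-\lambda^n_\ell t}\, \phi^n_\ell(x)$, where $\phi^n_\ell$ are the discrete sine eigenfunctions, $\lambda^n_\ell = n^2\cdot 2(1-\cos(\pi\ell/n)) = \pi^2\ell^2 + O(\ell^4/n^2)$ are the eigenvalues of $-\Delta_n$, and $c^n_\ell \to c_\ell(u_0)$ as $n\to\infty$. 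By orthonormality, $\sum_x v^n_t(x)^2 \approx \tfrac{1}{2}\sum_\ell (c^n_\ell)^2 e^{-2\lambda^n_\ell t}$ (wait — with the $\sqrt2$ normalization in the definition of $c_\ell$, the bookkeeping gives $\sum_x v^n_t(x)^2 \approx \sum_\ell (c^n_\ell)^2 e^{-2\lambda^n_\ell t}$). At $t = t^n(b) = \tfrac{1}{2\pi^2\ell_0^2}\log n + \tfrac{b}{\pi^2\ell_0^2}$ we have $e^{-2\lambda^n_{\ell_0} t^n(b)} = n^{-1} e^{-2b}(1+o(1))$, while every higher mode $\ell > \ell_0$ with $c_\ell(u_0)\neq 0$ is suppressed by a further factor $n^{-(\ell^2/\ell_0^2 - 1)} \to 0$. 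Hence $\sigma_n^2 \to \gamma^2 e^{-2b}$ with $\gamma = |c_{\ell_0}(u_0)|/\sqrt{\rho(1-\rho)}$ exactly as in Theorem \ref{t1}. One must also check $\|v^n_{t^n(b)}\|_\infty \to 0$ and $\sum_x v^n_{t^n(b)}(x)^3 \to 0$, which follow from the same Fourier estimates and justify dropping the higher-order terms in the Taylor expansion as well as verifying a Lindeberg-type condition.

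Given this, I would conclude by a Lindeberg/Lyapunov CLT: under $\bar\nu^n_\rho$, $W_n \Rightarrow \mathcal N(0,\gamma^2 e^{-2b})$, so $\log\tfrac{d\nu^n_{t^n(b)}}{d\bar\nu^n_\rho} \Rightarrow \mathcal N(-\tfrac12\gamma^2 e^{-2b},\, \gamma^2 e^{-2b})$ — a Gaussian of the log-normal exponent form appearing in \eqref{azapa} with $m = \gamma e^{-b}$. Finally, using the variational formula $\|\mu-\nu\|_{\TV} = \tfrac12\int |\tfrac{d\mu}{d\nu}-1|\,d\nu = \tfrac12\,\mathbb E_\nu\big[|e^{L_n}-1|\big]$ where $L_n$ is the log-likelihood ratio, together with uniform integrability of $e^{L_n}$ under $\bar\nu^n_\rho$ (immediate since $\mathbb E_{\bar\nu^n_\rho}[e^{L_n}] = 1$ always, and one gets $\mathbb E[e^{2L_n}]$ bounded from the explicit product form), the convergence in distribution upgrades to convergence of the expectation, giving $\tfrac12\mathbb E[|e^{mX - m^2/2}-1|] = \mc G(\gamma e^{-b})$. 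The main obstacle I anticipate is the uniform control of the Taylor remainder and the Lindeberg condition \emph{simultaneously} over the whole interval $\Lambda_n$ at the divergent time $t^n(b)$: one needs the $L^\infty$, $L^2$, and $L^3$ norms of $v^n_{t^n(b)}$ to have precisely the right orders ($o(1)$, $\Theta(1)$ in the right normalization, $o(1)$ respectively), and getting the $L^2$ asymptotics with the correct constant $\gamma$ requires carefully tracking the discrete-vs-continuous Fourier corrections (the discrepancy between $\lambda^n_\ell$ and $\pi^2\ell^2$, and between $c^n_\ell$ and $c_\ell(u_0)$) and showing they wash out in the limit.
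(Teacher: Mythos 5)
Your proof follows essentially the same route as the paper's: write $\|\nu^n_{t^n(b)}-\bar\nu^n_\rho\|_{\TV}=\tfrac12\int|\psi^n_t-1|\,d\bar\nu^n_\rho$ with $\psi^n_t$ the product Radon--Nikodym density, Taylor-expand $\log\psi^n_t$ into a centered linear statistic plus a drift $\approx -\tfrac12\mathrm{Var}$, track the Fourier asymptotics of $u^n_{t^n(b)}-\rho$ at the cutoff time, verify a Lyapunov CLT, and upgrade to convergence of the expectation via uniform integrability (the paper uses Hoeffding on $e^{p\,s^n_t X^n_t}$; your direct $L^2$-bound $\int(\psi^n_t)^2\,d\bar\nu^n_\rho=\prod_x\bigl(1+\tfrac{(u^n_t(x)-\rho)^2}{\rho(1-\rho)}\bigr)$ works equally well and is a touch cleaner). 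One bookkeeping slip worth fixing: since $\phi^n_\ell(x)=\sqrt2\sin(\pi\ell x/n)$ satisfies $\sum_{x\in\Lambda_n}\phi^n_\ell(x)^2=n$, you should have $\sum_x v^n_t(x)^2 = n\sum_\ell(c^n_\ell)^2e^{-2\lambda^n_\ell t}$; the factor $n$ you dropped is exactly what cancels the $n^{-1}$ coming from $e^{-2\lambda^n_{\ell_0}t^n(b)}$, so as written your intermediate display would force $\sigma_n^2\to 0$, even though the limit $\sigma_n^2\to\gamma^2e^{-2b}$ you state at the end is the correct one. Also note that, to make the $L^\infty$, $L^2$ and higher-moment controls on $v^n_{t^n(b)}$ uniform in $x$ and over $b$ in compacts, the paper routes everything through the single $L^\infty$ estimate of Lemma \ref{la8}, which in turn relies on the eigenvalue comparison of Lemma \ref{la0}; your approach would need the same ingredient to suppress the modes $\ell>\ell_0$ uniformly.
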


The main ingredient of the proof of Theorem \ref{t2} is a logarithmic Sobolev inequality for inhomogeneous product measures. The proof of this log-Sobolev inequality can be found in Section \ref{s4}.
Let $\Gamma_n$ be the \emph{carr\'e du champ} operator associated with $\mf{L}_n$: for every $f: \Omega_n \to \bb R$, $\Gamma_n f := \mf{L}_n f^2 - 2 f \mf{L}_n f$. 

\begin{theorem}
\label{t5} 
Let $\rho \in (0,1)$, $\eps_0 \in (0,\min\{\rho,1-\rho\}]$ and $\kappa >0$ be fixed. There exists a positive constant $K_0=K_0(\rho,\eps_0,\kappa)$ such that 
\begin{equation}
\label{mejillones}
H_{\nu_{u(\cdot)}^n}(f) \leq \frac{1}{K_0} \int \Gamma_n \sqrt{f} d \nu_{u(\cdot)}^n
\end{equation}
for every $u \in \mc U_{\eps_0,\kappa}^n$ and every density $f$ with respect to $\nu_{u(\cdot)}^n$.
\end{theorem}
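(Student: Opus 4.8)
## Proof Strategy for Theorem \ref{t5}

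The plan is to prove the log-Sobolev inequality by \emph{comparison of Dirichlet forms}: replace the exclusion-with-reservoirs dynamics by a pure Glauber (spin-flip) dynamics that acts independently at every site of $\Lambda_n$, for which the log-Sobolev constant is classical and explicit, and then show that the carré du champ $\Gamma_n$ of $\mf L_n$ dominates (up to a constant depending only on $\rho,\eps_0,\kappa$) the Dirichlet form of that auxiliary Glauber dynamics. The key structural observation is the one flagged in the introduction: although $\mf L_n$ only flips spins at the two boundary sites $\{1,n-1\}$, one can \emph{manufacture} a flip at an arbitrary interior site $x$ by a chain of $O(1)$ moves—transport the discrepancy along the edges from $x$ to the nearest boundary site using the exchange generators $\nabla_{z,z+1}$, flip it there with the Glauber part, and transport back. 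Since each such elementary move costs a bounded amount of Dirichlet energy and there are at most $n-1$ sites involved, one would naively lose a factor $n$; the scaling $n^2$ in front of both the bulk and the boundary rates is precisely what absorbs this, keeping $K_0$ of order one.

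Concretely, I would first fix $u \in \mc U_{\eps_0,\kappa}^n$ and introduce the product Glauber dynamics whose generator is $\mf G_n f(\eta) := \sum_{x \in \Lambda_n} r_u(x,\eta)\,\nabla_x f(\eta)$ with site-dependent flip rates $r_u(x,\eta)$ chosen so that $\nu_{u(\cdot)}^n$ is reversible (e.g. $r_u(x,\eta) = u(x)(1-\eta(x)) + (1-u(x))\eta(x)$, up to a normalisation). Because $\nu_{u(\cdot)}^n$ is a product measure and the flip rates are bounded above and below by constants depending only on $\eps_0$ (using $\eps_0 \le u(x) \le 1-\eps_0$), the tensorisation property of the log-Sobolev inequality gives
\[
H_{\nu_{u(\cdot)}^n}(f) \le C_1(\eps_0) \sum_{x \in \Lambda_n} \int \big(\nabla_x \sqrt f\big)^2 \, d\nu_{u(\cdot)}^n,
\]
i.e. a log-Sobolev inequality for $\mf G_n$ with a constant independent of $n$. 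The second and main step is the comparison estimate: show there is $C_2(\rho,\eps_0,\kappa)$ with
\[
\sum_{x \in \Lambda_n} \int \big(\nabla_x \sqrt f\big)^2 \, d\nu_{u(\cdot)}^n \;\le\; C_2 \int \Gamma_n \sqrt f \, d\nu_{u(\cdot)}^n.
\]
Here one writes $\Gamma_n \sqrt f = n^2 \sum_{x=1}^{n-2} (\nabla_{x,x+1}\sqrt f)^2 + n^2 \sum_{x\in\{1,n-1\}} c_\rho(x,\eta)(\nabla_x \sqrt f)^2$ (the standard identity for the carré du champ of exclusion + Glauber, with bounded rates $c_\rho$), and expresses each "virtual" flip gradient $\nabla_x g$ at an interior site $x$ as a telescoping sum of one boundary flip and a string of exchanges, $\nabla_x g(\eta) = \nabla_{1,2}(\cdots) + \nabla_1(\cdots) + \cdots$, then applies Cauchy–Schwarz to the sum of at most $x$ terms and changes variables in the integral (each map $\eta \mapsto \eta^{z,z+1}$ or $\eta \mapsto \eta^{1}$ changes $\nu_{u(\cdot)}^n$ only by a bounded Radon–Nikodym factor, using the Lipschitz bound $|u'| \le \kappa$ to control ratios $u(z)/u(z+1)$). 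Summing over $x$ produces a factor at most $O(n^2)$ from the $n$ sites times the $O(n)$-length paths, which is exactly matched by the $n^2$ speed-up inside $\Gamma_n$.

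The main obstacle is the bookkeeping in this comparison step: one must route the paths so that no single bulk edge or boundary site is overloaded—i.e. so that when one sums the contributions $\int (\nabla_x \sqrt f)^2 d\nu$ over all interior $x$, each term $\int(\nabla_{z,z+1}\sqrt f)^2 d\nu$ on the right is used with multiplicity $O(n)$ rather than $O(n^2)$—and then check that $O(n) \times O(1/n^2) \times n^2 = O(n)$ does \emph{not} blow up, which forces a slightly more careful weighting (splitting $\Lambda_n$ at its midpoint so each site reaches the nearer boundary, giving path lengths that telescope to a bounded total when divided among edges). A secondary technical point is controlling the Radon–Nikodym factors uniformly: the change of variables along a path of length $k$ multiplies the density by $\prod$ of $k$ ratios each within $1 \pm O(\kappa/n)$ of $1$, so the product stays bounded by $e^{O(\kappa)}$ uniformly in $n$ and in the path. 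Once these two estimates are combined one reads off $K_0 = 1/(C_1 C_2)$, completing the proof. I would also remark, as the introduction does, that the argument is robust under slowing the boundary rates by a factor $n^{\theta}$ for $\theta$ up to the critical scale, since that only rescales $C_2$ and the path-transport budget correspondingly.
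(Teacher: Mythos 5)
Your strategy is correct and reaches the same $n$-independent constant $K_0$, but it takes a genuinely different technical route than the paper. You first tensorise the log-Sobolev inequality over the product measure $\nu_{u(\cdot)}^n$ — single-site LSI for $\Bern(u(x))$, uniform on $u(x)\in[\eps_0,1-\eps_0]$, summed over $x$ — to get a reduction $H_{\nu_{u(\cdot)}^n}(f) \lesssim \sum_x \mc D_x(\sqrt f)$, and then compare this all-site Glauber Dirichlet form with $\Gamma_n$ by routing each interior flip $\nabla_x$ to the boundary through a chain of exchanges, applying Cauchy--Schwarz once to the whole chain and absorbing the resulting $O(n^2)$ factor (after the sum over $x$) into the diffusive prefactor inside $\Gamma_n$. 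The paper instead runs the Lu--Yau martingale method: it conditions on $\mc G_\ell=\sigma(\eta(1),\dots,\eta(\ell))$, decomposes the entropy one coordinate at a time via \eqref{caldera} with $g=E[f\,|\,\eta(\ell)]$, controls the single-coordinate term using \cite[Lemma~2]{LeeYau}, and closes a recursion $K_\ell^{-1}\le BCn/\eps_0 + K_{\ell-1}^{-1}$; the transport of $\mc D_\ell$ to the boundary enters through Lemma~\ref{l1}, which iterates the one-step identity $\nabla_x f = \nabla_{x-1,x}f + (\nabla_{x-1}f)\circ(\cdot^{x-1,x}) + (\nabla_{x-1,x}f)\circ\big((\cdot^{x-1,x})^{x-1}\big)$ together with $(a+b+c)^2\le 2(1+\beta)(a^2+b^2)+(1+\tfrac1\beta)c^2$, choosing $\beta=n-1$, rather than telescoping the full path and Cauchy--Schwarzing once. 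Your remark that the Radon--Nikodym ratios along a path of length $k$ are each $1\pm O(\kappa/n)$, so their product stays $e^{O(\kappa)}$, is precisely the role played by the paper's $\alpha=\kappa/(\eps_0^2 n)$ and the bound $(1+\alpha)^{\ell-1}\le e^{\alpha n}$. What you gain is conceptual economy — tensorisation is more elementary than the martingale recursion, and for product measures they are essentially equivalent; what the paper's organisation via Theorem~\ref{t6} with the free parameter $\theta$ gains is making the slow-boundary robustness you note at the end an explicit byproduct of the statement rather than a side remark.
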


\begin{proof}[Proof of Theorem \ref{t1}]
Fix $B >0$ and take $b \in [-B,B]$. By Theorem \ref{t2} and equation \eqref{iquique},
\[
\big| D_n(t^n(b); \nu^n_0) - \|\nu_{t^n(b)}^n - \bar{\nu}_{\hspace{-1.5pt}\rho}^{\hspace{0.3pt}n}\|_{\TV} \big| \leq C_1 e^{-\delta_0 t^n(b)/2},
\]
where $C_1= C_1(\eps_0,\kappa)$. Therefore,
\[
\big| D_n(t^n(b); \nu^n_0) - \|\nu_{t^n(b)}^n - \bar{\nu}_{\hspace{-1.5pt}\rho}^{\hspace{0.3pt}n}\|_{\TV} \big|\leq \frac{C_1 e^{2 \delta_1 B}}{n^{\delta_1}},
\]
where $\delta_1 = \frac{\delta_0}{4 \pi^2 \ell_0^2}$. The exact value of all these constants is not important; we only need that the right-hand side of this estimate converges to $0$ uniformly in $b \in [-B,B]$. From Theorem \ref{t3}, we conclude that
\[
\lim_{n \to \infty} D(t^n(b); \nu_0^n) = \mc G(\gamma e^{-b}),
\]
as we wanted to show.
\end{proof}

\section{The log-Sobolev inequality}
\label{s4}
In this section we prove the log-Sobolev inequality stated in Theorem \ref{t5}. In order to make the proof easier to follow, we prove a different version of the log-Sobolev inequality, from which Theorem \ref{t5} follows by comparison. 

For each $\kappa >0$, $n \in \{2,3,\dots\}$ and $\eps_0 \in (0,\min\{\rho,1-\rho\}]$, let $\mc U_{\kappa,\eps_0}^n$ be the class of functions $u: \Lambda_n \to [0,1]$ such that:
\begin{itemize}

\item $\eps_0 \leq u(x) \leq 1-\eps_0$ for every $x \in \Lambda_n$;

\item $n|u(x+1) -u(x)| \leq \kappa$ for every $x \in \{1,\dots, n-2\}$.

\end{itemize}

Fix $\theta >0$. Recall \eqref{arica}. For each $f: \Omega_n \to \bb R$ and each $x \in \{1,\dots,n-2\}$, let us define
\[
\mc D_x(f) := \int \big( \nabla_x f(\eta)\big)^2 d \nu_{u(\cdot)}^n,
\]
\[
\mc D_{x,x+1}(f) := \int \big( \nabla_{x,x+1} f(\eta)\big)^2 d \nu_{u(\cdot)}^n,
\]
\begin{equation}
\label{tocopilla}
\mc D(f) := \frac{\theta}{n} \mc D_1(f) + \sum_{x=1}^{n-2} \mc D_{x,x+1}(f).
\end{equation}
The quadratic form $\mc D(f)$ turns out to be the Dirichlet form of a simple exclusion process in $\Omega_n$ with inhomogeneous drift and with a reservoir at $x=1$ with intensity $\frac\theta n$.  This interpretation will not be used in this article, and therefore we will not enter into more details.

We prove the following:

\begin{theorem}
\label{t6}
Let $\theta, \kappa >0$ and $\eps_0 \in (0,\tfrac{1}{2}]$. There exists a positive constant $K= K(\eps_0, \kappa, \theta)$ such that
\begin{equation}
\label{calama}
H_{\nu_{u(\cdot)}^n}(f) \leq \frac{n^2}{K} \mc D (\sqrt{f} ) 
\end{equation}
for every $u \in \mc U_{\kappa, \eps_0}^n$ and every density $f$ with respect to $\nu_{u(\cdot)}^n$.
\end{theorem}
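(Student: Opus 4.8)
The plan is to deduce Theorem~\ref{t6} from two separate facts and then combine them. The first is a logarithmic Sobolev inequality for the product measure $\nu_{u(\cdot)}^n$ relative to the Dirichlet form of the Glauber dynamics that flips each site of $\Lambda_n$ independently; the second is a comparison of Dirichlet forms showing that, after the diffusive rescaling by $n^2$, the exclusion dynamics together with the single slow reservoir at $x=1$ dominates this site-by-site Glauber dynamics. Concretely, I would establish: (i) there is $C_1=C_1(\eps_0)<\infty$, independent of $n$ and of $u\in\mc U_{\kappa,\eps_0}^n$, such that
\[
H_{\nu_{u(\cdot)}^n}(f)\le C_1\sum_{x=1}^{n-1}\mc D_x(\sqrt f)\qquad\text{for every density }f;
\]
and (ii) there is $C_2=C_2(\eps_0,\kappa,\theta)<\infty$, independent of $n$ and of $u\in\mc U_{\kappa,\eps_0}^n$, such that
\[
\sum_{x=1}^{n-1}\mc D_x(g)\le C_2\,n^2\,\mc D(g)\qquad\text{for every }g\colon\Omega_n\to\bb R.
\]
Granting (i) and (ii), Theorem~\ref{t6} follows at once with $K=(C_1C_2)^{-1}$, by applying (i) and then (ii) with $g=\sqrt f$.

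Statement (i) is the standard tensorization of the logarithmic Sobolev inequality. Because $\nu_{u(\cdot)}^n$ is a product measure, the subadditivity of entropy bounds $H_{\nu_{u(\cdot)}^n}(f)$ by $\sum_{x=1}^{n-1}\int\mathrm{Ent}_x(f)\,d\nu_{u(\cdot)}^n$, where $\mathrm{Ent}_x(f)(\eta)$ denotes the entropy of $f$ in the coordinate $\eta(x)$ with the remaining coordinates frozen; and the two-point logarithmic Sobolev inequality, whose optimal constant for a Bernoulli parameter ranging over $[\eps_0,1-\eps_0]$ is bounded by some $C_1(\eps_0)<\infty$, bounds $\mathrm{Ent}_x(f)$ by $C_1(\eps_0)(\nabla_x\sqrt f)^2$ (note that $(\nabla_x\sqrt f(\eta))^2=(\sqrt{f}|_{\eta(x)=1}-\sqrt{f}|_{\eta(x)=0})^2$ is independent of $\eta(x)$, so its $\nu_{u(\cdot)}^n$-integral is exactly $\mc D_x(\sqrt f)$). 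Integrating and summing over $x$ gives (i). This is the only place the bounds $\eps_0\le u(x)\le1-\eps_0$ in the definition of $\mc U_{\kappa,\eps_0}^n$ are used.

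Statement (ii) is the core of the proof, and also the main obstacle; the delicate point is to make the \emph{slow} reservoir term $\mc D_1$ appear on the right-hand side with the correct power of $n$. Fix $x\in\Lambda_n$. I would join $\eta$ to $\eta^x$ by the explicit path in $\Omega_n$ that successively transposes the contents of the edges $\{x-1,x\},\{x-2,x-1\},\dots,\{1,2\}$ (thereby sliding the value originally sitting at $x$ down to site $1$), then flips site $1$, and then reverses the $x-1$ transpositions; this path has $2x-1$ steps, and it uses each edge $\{j,j+1\}$ with $1\le j\le x-1$ exactly twice and the flip at $x=1$ exactly once. Telescoping $g(\eta^x)-g(\eta)$ along this path and applying a Cauchy--Schwarz inequality with weight $x$ on the single flip step and weight $1$ on each transposition step, one obtains, after integration in $\eta$,
\[
\mc D_x(g)\le M(\eps_0,\kappa)\Bigl(6x\sum_{j=1}^{x-1}\mc D_{j,j+1}(g)+3\,\mc D_1(g)\Bigr).
\]
The weighting is the essential trick: an unweighted Cauchy--Schwarz would place a factor of order $x$ in front of $\mc D_1(g)$, hence a factor of order $n^2$ after summing over $x$, which cannot be absorbed by the term $\tfrac\theta n\mc D_1$ inside $n^2\mc D(g)$; the weighted inequality instead leaves only an $O(1)$ coefficient per site on $\mc D_1$. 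The constant $M(\eps_0,\kappa)$ bounds the Radon--Nikodym derivatives of all the configuration maps occurring along the path: an adjacent transposition at $\{j,j+1\}$ multiplies $\nu_{u(\cdot)}^n$ by a factor whose logarithm has absolute value at most $\tfrac2{\eps_0}|u(j+1)-u(j)|\le\tfrac{2\kappa}{n\eps_0}$, while the flip at site $1$ multiplies it by a factor at most $\tfrac{1-\eps_0}{\eps_0}$; so any composition of at most $2n$ transpositions and one flip has Radon--Nikodym derivative at most $e^{4\kappa/\eps_0}\cdot\tfrac{1-\eps_0}{\eps_0}=:M(\eps_0,\kappa)$, uniformly in $n$. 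This is precisely where the regularity bound $n|u(x+1)-u(x)|\le\kappa$ enters. Finally, summing the last display over $x=1,\dots,n-1$ and exchanging the order of summation yields
\[
\sum_{x=1}^{n-1}\mc D_x(g)\le 3M(\eps_0,\kappa)\Bigl(n\,\mc D_1(g)+n^2\sum_{j=1}^{n-2}\mc D_{j,j+1}(g)\Bigr)\le 3M(\eps_0,\kappa)\max\{\theta^{-1},1\}\;n^2\,\mc D(g),
\]
which is (ii) with $C_2=3M(\eps_0,\kappa)\max\{\theta^{-1},1\}$, and the theorem follows.
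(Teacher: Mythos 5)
Your proposal is correct. It proves the same statement by a route that parallels the paper's in spirit but differs in its technical machinery, and it is worth recording the comparison. Where you use the subadditivity of entropy (tensorization for product measures) together with the uniform two-point log-Sobolev inequality to reduce matters to bounding $\sum_{x=1}^{n-1}\mc D_x(\sqrt f)$, the paper instead runs the Lu--Yau martingale method: it defines a sequence of optimal constants $K_\ell$ for densities measurable with respect to the first $\ell$ coordinates and the truncated Dirichlet form $\mc D^\ell$, decomposes the entropy at level $\ell$ into an $\eta(\ell)$-marginal piece plus a conditional $\mc G_{\ell-1}$-measurable piece, applies the two-point inequality from \cite{LeeYau} to the former, applies $K_{\ell-1}$ to the latter, and derives a recursion $K_\ell^{-1}\le\frac{BCn}{\eps_0}+K_{\ell-1}^{-1}$, which telescopes to $K_{n-1}^{-1}=\mc O(n^2)$. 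For product measures the two devices are essentially the same idea presented recursively versus collapsed; your version is arguably the more elementary formulation, while the martingale phrasing generalizes more readily when the measure is not a product. For the Dirichlet-form comparison, the paper's Lemma~\ref{l1} plays the role of your step~(ii); it writes $\nabla_x f$ as the three-term telescope through $\eta^{x-1,x}$ and $(\eta^{x-1,x})^{x-1}$, applies $(a+b+c)^2\le 2(1+\beta)(a^2+b^2)+(1+\beta^{-1})c^2$, and iterates with the choice $\beta=n-1$. That parameter $\beta$ is doing exactly the job of your weighted Cauchy--Schwarz: it keeps the cumulative factor on $\mc D_1$ of order one (the iterated $(1+\beta^{-1})^{\ell-1}$ stays bounded) while the exchange terms pick up a factor of order $n$, so that after summing over the flip site, the reservoir term comes out as $\mc O(n)\,\mc D_1$ rather than $\mc O(n^2)\,\mc D_1$. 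You have correctly identified that this is the delicate point and solved it with the explicit path and per-step weights; the paper solves the same difficulty with the free parameter in the three-term inequality and an exponential bound on the product of the one-step factors. The control of Radon--Nikodym derivatives along the path by $e^{\mc O(\kappa/\eps_0)}\cdot\tfrac{1-\eps_0}{\eps_0}$, uniformly in $n$, matches the paper's bound $e^{\alpha n+1}$ with $\alpha n\le\kappa/\eps_0^2$. Both arguments yield $K^{-1}=\mc O(n^2)$ with a constant depending only on $(\eps_0,\kappa,\theta)$, so the proposal is complete and sound.
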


Estimates of this kind are known in the literature as \emph{log-Sobolev inequalities}. The log-Sobolev constant $K_{\LS}$ is defined as the largest constant $K$ that satisfies \eqref{calama}. Theorem \ref{t6} shows that $K_{\LS}^{-1}$ is uniformly bounded in $n$.

We need the following result to prove Theorem \ref{t6}:

\begin{lemma}[Comparison of quadratic forms]
\label{l1}
There exists a finite constant $C=C(\eps_0,\kappa,\theta)$ such that for every $n \in \{2,3,\dots\}$, every $\ell \in \{2, \dots, n-1\}$, every $f: \Omega_n \to \bb R$ and every $u \in \mc U_{\kappa, \eps_0}^n$,
\[
\mc D_{\ell}(f) \leq C n \mc D(f).
\]
\end{lemma}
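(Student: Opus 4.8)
The plan is to realise the single-site flip $\eta\mapsto\eta^\ell$ as a telescoping composition of the elementary moves that already appear in $\mc D$ — nearest-neighbour exchanges $\eta\mapsto\eta^{x,x+1}$ and the reservoir flip $\eta\mapsto\eta^1$ at site $1$ — and then to convert this into the quadratic-form bound via a \emph{weighted} Cauchy--Schwarz inequality together with a change of variables. Fix $\ell$ and let $R:=\tau_{1,2}\circ\tau_{2,3}\circ\cdots\circ\tau_{\ell-1,\ell}$, where $\tau_{x,y}\eta:=\eta^{x,y}$. A direct check gives $(R\eta)(1)=\eta(\ell)$ and $R^{-1}\big((R\eta)^1\big)=\eta^\ell$, so
\[
\nabla_\ell f(\eta)=\big[f(R\eta)-f(\eta)\big]+\nabla_1 f(R\eta)+\big[f\big(R^{-1}(R\eta)^1\big)-f\big((R\eta)^1\big)\big].
\]
Telescoping the first and third brackets along the $\ell-1$ adjacent transpositions defining $R$ (resp.~$R^{-1}$) writes $\nabla_\ell f(\eta)$ as a sum of $2(\ell-1)$ terms of the form $\nabla_{x,x+1}f(\sigma)$, with $x\in\{1,\dots,\ell-1\}$ and $\sigma$ obtained from $\eta$ by at most $2\ell-2$ elementary moves, plus the single term $\nabla_1 f(R\eta)$; moreover each edge $\{x,x+1\}$, $x\le\ell-1$, is used at most twice.

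The weighting in the Cauchy--Schwarz step is the point I would be careful about. Applying it with weight of order $\ell$ on the lone flip term $\nabla_1 f(R\eta)$ and weight $1$ on each of the $2(\ell-1)$ exchange terms, squaring and integrating against $\nu:=\nu_{u(\cdot)}^n$ gives, \emph{before} the change of variables,
\[
\mc D_\ell(f)\ \le\ 2\int\big(\nabla_1 f(R\eta)\big)^2 d\nu\ +\ 8(\ell-1)\sum_{x=1}^{\ell-1}\int\big(\nabla_{x,x+1}f(\sigma_x)\big)^2 d\nu .
\]
The factor $\ell\le n$ on the exchange part is harmless, and — crucially — the \emph{absence} of any factor $\ell$ in front of the flip term is exactly what is needed so that it can later be absorbed into the term $\tfrac{\theta}{n}\mc D_1(f)$ of $\mc D$; a plain unweighted Cauchy--Schwarz would cost an extra factor $n$ here and would only yield $\mc D_\ell(f)\lesssim n^2\mc D(f)$.

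The main obstacle is to remove the shifted arguments $\sigma=T\eta$ at a cost bounded \emph{uniformly in $n$}, where each such $T$ is a composition of at most $2\ell-2\le 2n$ nearest-neighbour transpositions possibly followed by the flip at $1$. For one transposition, $\nu(\eta^{z,z+1})/\nu(\eta)\in\{1,r,r^{-1}\}$ with $r=\tfrac{(1-u(z))u(z+1)}{u(z)(1-u(z+1))}$, and since $u(z),u(z+1)\in[\eps_0,1-\eps_0]$ with $|u(z+1)-u(z)|\le\kappa/n$ one gets $|r-1|\le\kappa/(n\eps_0^2)$ and $|r^{-1}-1|\le\kappa/(n\eps_0^2)$; for the flip, $\nu(\eta^1)/\nu(\eta)\in[\tfrac{\eps_0}{1-\eps_0},\tfrac{1-\eps_0}{\eps_0}]$. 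Hence
\[
\frac{\nu(T\eta)}{\nu(\eta)}\ \le\ \Big(1+\frac{\kappa}{n\eps_0^2}\Big)^{2n}\frac{1-\eps_0}{\eps_0}\ \le\ e^{2\kappa/\eps_0^2}\,\frac{1-\eps_0}{\eps_0}\ =:\ M(\eps_0,\kappa)
\]
for every $n\ge2$, and the same bound applies to $T^{-1}$, so $\nu(\eta)/\nu(T\eta)\le M$ too. It is precisely the Lipschitz condition $n|u(x+1)-u(x)|\le\kappa$ built into $\mc U_{\kappa,\eps_0}^n$ that makes each per-step factor $1+O(1/n)$ and thus keeps the product of $O(n)$ of them bounded; without it one would only get $(\eps_0^{-1})^{O(n)}$. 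Changing variables $\zeta=T\eta$ in each integral above therefore costs at most a factor $M$, giving $\int(\nabla_1 f(R\eta))^2 d\nu\le M\,\mc D_1(f)$ and $\int(\nabla_{x,x+1}f(\sigma_x))^2 d\nu\le M\,\mc D_{x,x+1}(f)$. Plugging these in, using $\ell\le n$ and that each edge is used at most twice,
\[
\mc D_\ell(f)\ \le\ 2M\,\mc D_1(f)+8nM\sum_{x=1}^{n-2}\mc D_{x,x+1}(f)\ \le\ \Big(\tfrac{2}{\theta}+8\Big)M\,n\,\mc D(f),
\]
where in the last step I used $\mc D_1(f)\le\tfrac{n}{\theta}\mc D(f)$ and $\sum_x\mc D_{x,x+1}(f)\le\mc D(f)$. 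This gives the claim with $C=C(\eps_0,\kappa,\theta):=(2\theta^{-1}+8)M(\eps_0,\kappa)$.
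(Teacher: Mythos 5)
Your proof is correct and is essentially the same as the paper's: the paper's local identity $\eta^x=\big((\eta^{x-1,x})^{x-1}\big)^{x-1,x}$, iterated from $x=\ell$ down to $x=2$, unrolls to exactly your path ``apply $R$, flip at $1$, apply $R^{-1}$'', and the paper's choice $\beta=n-1$ in $(a+b+c)^2\le 2(1+\beta)(a^2+b^2)+(1+\beta^{-1})c^2$ plays precisely the role of your weighted Cauchy--Schwarz, charging $O(n)$ to the exchange terms and $O(1)$ to the boundary flip. Your compounded change-of-variables constant $M=e^{2\kappa/\eps_0^2}(1-\eps_0)/\eps_0$ is the unrolled version of the paper's per-step factor $(1+\alpha)$ with $\alpha=\kappa/(\eps_0^2 n)$, which the paper likewise bounds via $(1+\alpha)^{\ell-1}\le e^{\kappa/\eps_0^2}$, so the two arguments exploit the Lipschitz condition on $u$ in the same way.
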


\begin{proof}
Observe that for each $x \in \{2,\dots,n-1\}$, 
\[
\nabla_x f(\eta) = \nabla_{x-1,x}f(\eta) + \nabla_{x-1} f(\eta^{x-1,x}) + \nabla_{x-1,x} f\big( (\eta^{x-1,x})^{x-1}).
\]
Using the inequality
\[
(a+b+c)^2 \leq 2(1+\beta)(a^2+b^2) + \big(1+\tfrac{1}{\beta}\big) c^2,
\]
valid for every $a,b,c \in \bb R$ and any $\beta >0$, we see that
\[
\begin{split}
\mc D_x(f) 
	&\leq 2(1+\beta) \int \Big ( \big(\nabla_{x-1,x}(f)\big)^2 + \big( \nabla_{x-1,x} f \big( (\eta^{x-1,x})^{x-1}\big)\big)^2\Big) d \nu_{u(\cdot)}^n\\
	&\quad + \big(1+\tfrac{1}{\beta}\big) \int \big( \nabla_{x-1} f(\eta^{x-1,x})\big)^2 d \nu_{u(\cdot)}^n
\end{split}
\]
for every $x \in \{2,\dots,n-1\}$.
Performing some changes of variables, we see that
\[
\int \big( \nabla_{x-1,x} f \big( (\eta^{x-1,x})^{x-1}\big)\big)^2 d\nu_{u(\cdot)}^n
		=\int \big( \nabla_{x-1,x}f(\eta)\big)^2 \frac{\nu_{u(\cdot)}^n\big( (\eta^{x-1})^{x-1,x}\big)}{\nu_{u(\cdot)}^n(\eta)} d\nu_{u(\cdot)}^n,
\]
\[
\int \big( \nabla_{x-1} f(\eta^{x-1,x})\big)^2 d\nu_{u(\cdot)}^n = \int \big( \nabla_{x-1} f(\eta)\big)^2 \frac{\nu_{u(\cdot)}^n(\eta^{x-1,x})}{\nu_{u(\cdot)}^n(\eta)} d\nu_{u(\cdot)}^n.
\]
Analyzing the four possible cases we see that
\[
\frac{\nu_{u(\cdot)}^n\big( (\eta^{x-1})^{x-1,x}\big)}{\nu_{u(\cdot)}^n(\eta)} \leq \frac{1}{\eps_0} -1,
\]
\[
\frac{\nu_{u(\cdot)}^n(\eta^{x-1,x})}{\nu_{u(\cdot)}^n(\eta)} \leq 1+\alpha,
\]
where $\alpha =\frac{\kappa}{ \eps_0^2 n} $.
We conclude that
\begin{equation}
\label{taltal}
\mc D_x(f) \leq \frac{2(1+\beta)}{\eps_0} \mc D_{x-1,x}(f) +\Big( 1+ \frac{1}{\beta}\Big) ( 1+\alpha) \mc D_{x-1}(f).
\end{equation}
The idea now is to use this estimate to transport $\mc D_{\ell}(f)$ to the boundary. If we use \eqref{taltal}  successively for $x=\ell,\dots,2$ we obtain the estimate
\[
\begin{split}
\mc D_{\ell}(f) 
	&\leq \frac{2(1+\beta)}{\eps_0} \sum_{x=1}^{\ell-1} \Big[\Big(1+\frac{1}{\beta}\Big)(1+ \alpha)\Big]^{\ell-1-x} \mc D_{x,x+1}(f) + \Big[\Big(1+\frac{1}{\beta}\Big)(1+ \alpha)\Big]^{\ell-1} \mc D_1(f)\\
	&\leq  \Big[\Big(1+\frac{1}{\beta}\Big)(1+ \alpha)\Big]^{\ell-1}\bigg( \frac{2(1+\beta)}{\eps_0} \sum_{x=1}^{\ell-1} \mc D_{x,x+1}(f) + \mc D_1(f)\bigg)\\
	&\leq \Big[\Big(1+\frac{1}{\beta}\Big)(1+ \alpha)\Big]^{\ell-1}
			\max\Big\{ \frac{2(1+\beta)}{\eps_0} , \frac{n}{\theta}\Big\} \mc D(f).
\end{split}
\]
Taking $\beta = n-1$ and using the bound $(1+a)^b \leq e^{ab}$ we obtain the estimate
\[
\mc D_{\ell}(f) \leq  n \max\Big\{ \frac{2}{\eps_0}, \frac{1}{\theta}\Big\} e^{\alpha n +1} \mc D(f).
\]
Since $\alpha n \leq  \tfrac{ \kappa}{\eps_0^2} $, the lemma is proved.
\end{proof}
 
 \begin{proof}[Proof of Theorem \ref{t6}]
We use the martingale method of Yau developed in \cite{LuYau}. We follow the approach underlined in \cite[Chp.~3]{yau}. Below, all conditional expectations are taken with respect to $\nu_{u(\cdot)}^n$. For every $u \geq 0$, define $\phi(u) := u \log u$. The key observation is that for every $f: \Omega_n \to \bb R$ and every $\sigma$-algebra $\mc G$,  if $g = E[f|\mc G]$ and $h =f/g$, then
\begin{equation}
\label{caldera}
H_{\nu_{u(\cdot)}^n}(f)  = \int \phi(f) d \nu_{u(\cdot)}^n = \int \phi(g) d \nu_{u(\cdot)}^n + \int \phi(h) g d \nu_{u(\cdot)}^n.
\end{equation}
Since $\nu_{u(\cdot)}^n$ is a product measure, we can use this relation to estimate the log-Sobolev constant $K_{\LS}$ recursively. For each $\ell \in \{2,\dots,n-1\}$, let 
\[
\mc G_\ell := \sigma(\eta(x); x \in \{1,\dots,\ell\}),
\]
the $\sigma$-algebra generated by the first $\ell$ coordinates of the configuration $\eta$.

Let us define
\[
\mc D^\ell(f) := \sum_{x=1}^{\ell-1} \mc D_{x,x+1}(f) + \frac{\theta}{n} \mc D_1(f)
\]
and
\[
K_\ell =K_\ell(\theta,\eps_0,\kappa) := \inf_{f,u} \frac{\mc D^\ell(\sqrt{f})}{\int \phi(f) d \nu_{u(\cdot)}^n},
\]
where the infimum runs over all densities $f$ with respect to $\nu_{u(\cdot)}^n$ which are measurable with respect to $\mc G_{\ell}$ and all profiles $u \in \mc U_{\eps_0,\kappa}^n$. Observe that $K_{\LS} = K_{n-1}(\eps_0,\kappa, \theta)$. 

Fix $u \in \mc U_{\eps_0,\kappa}^n$ and $\ell \in \{2,\dots,n-1\}$. Let $f$ be a density with respect to $\nu_{u(\cdot)}^n$ that is measurable with respect to $\mc G_\ell$. Define $g: \{0,1\} \to \bb R$ as $g(q) := E[f |\eta(\ell) =q]$ for each $q \in \{0,1\}$. Let $\nu_{u(\cdot)}^{\ell-1}$ be the law of $\xi := (\eta(x); x \in \{1,\dots,\ell-1\})$ and let $\widehat{\nu}_{u(\cdot)}^\ell$ be the law of $\eta(\ell)$ with respect to $\nu_{u(\cdot)}^n$. Observe that $f$ can be thought as a function of $(\xi,q)$.

By \eqref{caldera},
\[
\int \phi(f) d \nu_{u(\cdot)}^n = \int \phi(g) d \widehat{\nu}^\ell_{u(\cdot)} +
		\int \bigg( \int \phi\Big( \frac{f(\xi,q)}{g(q)} \Big) \nu_{u(\cdot)}^{\ell-1}(d \xi)\bigg) g(q) \widehat{\nu}^\ell_{u(\cdot)}(d q). 
\]
Observe that for each $q \in \{0,1\}$, the function $\xi \mapsto \frac{f(\xi,q)}{g(q)}$ is  $\mc G_{\ell-1}$-measurable and it is a density with respect to $\nu_{u(\cdot)}^n$. By the definition of $K_\ell$,
\[
 \int \phi\Big( \frac{f(\xi,q)}{g(q)} \Big) \nu_{u(\cdot)}^{\ell-1}(d \xi) \leq \frac{1}{K_{\ell-1} g(q)} \mc D^{\ell-1}(\sqrt{f(\cdot,q)}).
 \]
 Therefore, we have that
 \[
 \int \phi(f)  d \nu_{u(\cdot)}^n \leq \int \phi(g)  \widehat{\nu}^\ell_{u(\cdot)}(d \theta) + \frac{1}{K_{\ell-1}} \mc D^{\ell-1}(f).
 \]
 Notice that $ \widehat{\nu}^\ell_{u(\cdot)}$ is a Bernoulli law of parameter $u(\ell)$. By \cite[Lemma 2]{LeeYau}, there exists a constant $B$ independent of $\xi$ and $u(\ell)$ such that
 \[
  \int \phi(g)  \widehat{\nu}^\ell_{u(\cdot)}(d q) \leq  B \Big( \sqrt{g(1)} - \sqrt{g(0)}\Big)^2.
 \]
 Let $X$ and $Y$ be non-negative random variables. Observe that 
 \[
 \big( \sqrt{\bb E [X]} - \sqrt{\bb E [Y]} \big)^2 \leq \bb E \big[ \big( \sqrt{X} - \sqrt{Y}\big)^2 \big].
 \]
 Taking $X(\xi) = f(\xi, 1)$ and $Y(\xi) = f(\xi,0)$, we see that
 \[
 \big( \sqrt{g(1)} - \sqrt{g(0)} \big)^2 \leq \int \big( \sqrt{f(\xi,1)} - \sqrt{f(\xi,0)}\big)^2 \nu_{u(\cdot)}^{\ell-1}(d \xi) = \frac{\mc D_{\ell}(\sqrt{f})}{2 u(\ell)(1-u(\ell))} \leq \frac{\mc D_{\ell} (\sqrt{f})}{\eps_0}.
 \]
 Using Lemma \ref{l1} with $n =\ell+1$, we see that
\[
\begin{split}
\int \phi(f)  d \nu_{u(\cdot)}^n 
	&\leq \frac{B}{\eps_0} \mc D_\ell (\sqrt{f}) + \frac{1}{K_{\ell-1}} \mc D^{\ell-1}(\sqrt f)\\
	&\leq \Big(  \frac{B C n}{\eps_0} + \frac{1}{K_{\ell-1}} \Big) \mc D^\ell(\sqrt{f})
\end{split}
\] 
from where 
\[
\frac{1}{K_\ell} \leq \frac{BC n}{\eps_0} + \frac{1}{K_{\ell-1}}
\]
for some finite constant $C = C(\eps_0,\kappa, \theta)$. 
Therefore, $K_\ell^{-1} \leq \frac{B C n \ell}{\eps_0}+K_{2}^{-1}$. Observe that $K_2^{-1} \leq B$. Taking $\ell =n-1$, the proof is complete.
\end{proof}
 
 \begin{proof}[Proof of Theorem \ref{t5}]
 First we observe that for any $f: \Omega_n \to \bb R$, 
 \[
\int  \Gamma_n f d \nu_{u(\cdot)}^n = n^2 \sum_{x=1}^{n-2} \mc D_{x,x+1}(f) + n^2 \!\!\!\!\!\!\sum_{x \in \{1,n-1\}}\int \big(\rho (1-\eta(x))+(1-\rho) \eta(x)\big) \big(\nabla_x f(\eta)\big)^2 d \nu_{u(\cdot)}^n.
 \]
 Therefore, for $\theta = n \min\{\rho,1-\rho\}$,
 \[
 n^2 \mc D(f) \leq \int \Gamma_n f d \nu_{u(\cdot)}^n
 \]
 for every $n \in \{2,3,\dots\}$ and every $f: \Omega_n \to \bb R$. Theorem \ref{t5} follows from this bounds and Theorem \ref{t6}, with $K_0(\rho,\eps_0,\kappa) = K(\eps_0,\kappa, \min\{\rho,1-\rho\})$.
 \end{proof}

\section{The relative entropy method}
\label{s5}
In this section we prove Theorem \ref{t2}. The proof follows the so-called \emph{Yau's relative entropy method}, introduced in \cite{yaurem}.
Let us recall the definition of the carr\'e du champ $\Gamma_n$: for every $f: \Omega_n \to \bb R$,  $\Gamma_n f = \mf{L}_n f^2 - 2 f \mf{L}_n f$. 
Let us use the product Bernoulli measure $\bar{\nu}_{\hspace{-1.5pt}\rho}^{\hspace{0.3pt}n}$ as a reference measure in $\Omega_n$. Recall the reference measures $\nu_t^n = \nu_{u_t^n(\cdot)}$ defined after \eqref{antofagasta}. Let $\psi_t^n: \Omega_n \to [0,\infty)$ be the Radon-Nikodym derivative of $\nu_t^n$ with respect to $\bar{\nu}_{\hspace{-1.5pt}\rho}^{\hspace{0.3pt}n}$, that is, $\psi_t^n(\eta) = \frac{\nu_t^n(\eta)}{\bar{\nu}_{\hspace{-1.5pt}\rho}^{\hspace{0.3pt}n}(\eta)}$ for every $\eta \in \Omega_n$. Let $\mf L^\ast_{n,t}$ be the adjoint of $\mf L_n$ with respect to $\nu_t^n$. The action of $\mf L_{n,t}^\ast$ over a function $g: \Omega_n \to \bb R$ is given by
\begin{equation}
\label{copiapo}
\begin{split}
\mf L_{n,t}^\ast g (\eta) 
	&:= n^2 \sum_{x=1}^{n-2} \Big( g(\eta^{x,x+1}) \frac{\nu_t^n(\eta^{x,x+1})}{\nu_t^n(\eta)} - g(\eta)\Big)\\
	& + n^2 \!\!\!\!\!\! \sum_{x \in \{1,n-1\}} \!\!\!\! \Big(
	\big( \rho \eta(x) + (1-\rho)(1-\eta(x)\big) g(\eta^x) \frac{\nu_t^n(\eta^x)}{\nu_t^n(\eta)} -
	\big( \rho(1-\eta(x)) + (1-\rho) \eta(x) \big) g(\eta) \Big)
\end{split}
\end{equation}
for every $\eta \in \Omega_n$. 

Yau's relative entropy inequality asserts the following (see \cite{mainlemma}):

\begin{proposition}[Yau's inequality]
\label{p2}
For each $t \geq 0$, let $\mu_t^n$ be the law of $\eta_t$ with respect to $\bb P_{\nu_0^n}$ and let $f_t^n$ be the Radon-Nikodym derivative of $\mu_t^n$ with respect to $\nu_t^n$. Recall that $H_n(t) := H_{\nu_t^n}(f_t^n)$. We have that 
\[
\tfrac{d}{dt} H_n(t) \leq - \int \Gamma_n \sqrt{f_t^n} d \nu_t^n + \int f_t^n \big( \mf L_{n,t}^\ast \mathds 1 - \partial_t \log \psi_t^n \big) d \nu_t^n,
\]
where $\mathds 1$ is the constant function equal to $1$.
\end{proposition}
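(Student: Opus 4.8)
The plan is to differentiate the relative entropy $H_n(t) = \int f_t^n \log f_t^n \, d\nu_t^n$ in time, using the fact that $\mu_t^n$ solves the forward Kolmogorov equation $\partial_t \mu_t^n = \mf L_n^* \mu_t^n$ (adjoint with respect to some fixed reference), while $\nu_t^n$ is a time-dependent product measure whose logarithmic derivative is explicit. Writing $H_n(t)$ against the fixed reference $\bar\nu_\rho^n$ is cleanest: with $\varphi_t^n := f_t^n \psi_t^n = d\mu_t^n/d\bar\nu_\rho^n$ we have $H_n(t) = \int \varphi_t^n \log(\varphi_t^n/\psi_t^n)\, d\bar\nu_\rho^n$. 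Differentiating and using $\int \partial_t \varphi_t^n \, d\bar\nu_\rho^n = 0$ gives
\[
\tfrac{d}{dt} H_n(t) = \int (\partial_t \varphi_t^n)\log f_t^n \, d\bar\nu_\rho^n - \int \varphi_t^n \, \partial_t \log \psi_t^n \, d\bar\nu_\rho^n.
\]
The second term is already $-\int f_t^n \,\partial_t \log\psi_t^n \, d\nu_t^n$, matching part of the claimed bound. For the first term, substitute $\partial_t \varphi_t^n = \mf L_n^* \varphi_t^n$ (adjoint w.r.t. $\bar\nu_\rho^n$) and move the adjoint back: $\int (\mf L_n^*\varphi_t^n)\log f_t^n\, d\bar\nu_\rho^n = \int \varphi_t^n \, \mf L_n \log f_t^n \, d\bar\nu_\rho^n = \int f_t^n \psi_t^n\, \mf L_n \log f_t^n\, d\bar\nu_\rho^n$.

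The next step is the standard entropy-dissipation estimate. One rewrites $\int f_t^n\psi_t^n \,\mf L_n \log f_t^n\, d\bar\nu_\rho^n = \int f_t^n \,\mf L_{n,t}^\natural \log f_t^n\, d\nu_t^n$ where $\mf L_{n,t}^\natural$ is the generator reweighted to have $\nu_t^n$ as reference (so $\psi_t^n$ gets absorbed), and then uses the elementary inequality valid for any Markov generator and any positive $f$,
\[
f \cdot L \log f \;\leq\; L f \;-\; \Gamma\sqrt{f}\cdot(\text{const})\,,
\]
more precisely the pointwise bound coming from $a(\log b - \log a) \le 2\sqrt a(\sqrt b - \sqrt a) - (\sqrt b - \sqrt a)^2$ applied at each jump: for each exchange or flip transition with rate $c$, writing $a = f(\eta)$, $b = f(\eta')$, one has $c\, a(\log b - \log a) \le c(b-a) - c(\sqrt b - \sqrt a)^2$. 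Summing the jump contributions, the $c(b-a)$ terms reassemble (after using that $\nu_t^n$ is not quite invariant) into $\int f_t^n\, \mf L_{n,t}^* \mathds 1\, d\nu_t^n$ — this is exactly where the defect term $\mf L_{n,t}^*\mathds 1$ enters, since $\mf L_{n,t}^*\mathds 1 \ne 0$ precisely because $\nu_t^n$ fails to be stationary — while the $c(\sqrt b - \sqrt a)^2$ terms reassemble into $\int \Gamma_n \sqrt{f_t^n}\, d\nu_t^n$. Combining the two displays yields the claimed inequality.

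The main obstacle, and the point requiring care, is the bookkeeping of the reference-measure change: the generator $\mf L_n$ is self-adjoint with respect to $\bar\nu_\rho^n$ on the bulk but the boundary Glauber part is not, and more importantly all the manipulations must be done consistently with respect to the \emph{time-dependent} $\nu_t^n$, so one must track how $\psi_t^n$ interacts with both the spatial operator $\mf L_n$ and the time derivative. The cleanest route is to verify the identity $\int g\, \mf L_n h\, d\nu_t^n = \int (\mf L_{n,t}^* g)\, h\, d\nu_t^n$ with $\mf L_{n,t}^*$ as defined in \eqref{copiapo}, apply it with $g = f_t^n$, $h = \log f_t^n$, and only then invoke the jumpwise convexity inequality; the term $\mf L_{n,t}^*\mathds 1$ appears naturally as $\int (\mf L_{n,t}^* f_t^n)\,\mathds 1\, d\nu_t^n$ is replaced by $\int f_t^n (\mf L_{n,t}^*\mathds 1)\, d\nu_t^n$ after one more adjoint move. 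The remaining estimates are the routine pointwise convexity bounds recalled above. Since a version of this computation already appears in \cite{mainlemma}, one can alternatively cite it and only indicate the (trivial) modification needed here.
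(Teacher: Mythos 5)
The paper does not prove Proposition~\ref{p2}; it only cites \cite{mainlemma}. Your reconstruction follows the same standard route used there, and the backbone of your argument is sound: rewrite $H_n(t)=\int\varphi^n_t\log f^n_t\,d\bar\nu_\rho^n$, differentiate and use $\int\partial_t\varphi^n_t\,d\bar\nu_\rho^n=0$ to isolate the $\partial_t\log\psi^n_t$ term, substitute the Kolmogorov forward equation $\partial_t\varphi^n_t=\mf L_n^{\,*}\varphi^n_t$ (adjoint in $L^2(\bar\nu_\rho^n)$), move the adjoint to arrive at $\int f^n_t\,\mf L_n\log f^n_t\,d\nu^n_t$, apply the pointwise convexity bound to control $f\,\mf L_n\log f$ by $\mf L_n f-\Gamma_n\sqrt f$, and finally dualize $\int \mf L_n f^n_t\,d\nu^n_t=\int f^n_t\,\mf L_{n,t}^{\,*}\mathds 1\,d\nu^n_t$.

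Three small slips should be fixed, though none breaks the argument. First, the first version of the pointwise bound you quote, $a(\log b-\log a)\le 2\sqrt a(\sqrt b-\sqrt a)-(\sqrt b-\sqrt a)^2$, is false (test $a=1,b=4$); the correct chain is $a(\log b-\log a)\le 2\sqrt a(\sqrt b-\sqrt a)=(b-a)-(\sqrt b-\sqrt a)^2$, so the ``$-$'' before the last term should be ``$=$''. Your second displayed bound, $c\,a(\log b-\log a)\le c(b-a)-c(\sqrt b-\sqrt a)^2$, is exactly right and is all you use. Second, there is no need to introduce a ``reweighted generator'' $\mf L_{n,t}^{\natural}$: the identity $\int f^n_t\psi^n_t\,\mf L_n\log f^n_t\,d\bar\nu_\rho^n=\int f^n_t\,\mf L_n\log f^n_t\,d\nu^n_t$ is just absorbing $\psi^n_t\,d\bar\nu_\rho^n=d\nu^n_t$; the generator stays $\mf L_n$. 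Third, the sentence claiming that $\int(\mf L_{n,t}^{\,*}f^n_t)\mathds 1\,d\nu^n_t$ ``is replaced by'' $\int f^n_t(\mf L_{n,t}^{\,*}\mathds 1)\,d\nu^n_t$ is not an adjoint move (the former equals $\int f^n_t\,\mf L_n\mathds 1\,d\nu^n_t=0$); the $\mf L_{n,t}^{\,*}\mathds 1$ term instead arises by dualizing $\int\mf L_n f^n_t\,d\nu^n_t$ after the convexity bound, exactly as you say two sentences earlier. With these bookkeeping corrections the proof is complete and matches the cited reference.
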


Observe that
\begin{equation}
\label{paipote}
\psi_t^n(\eta) = \prod_{x \in \Lambda_n} \bigg( \eta(x) \frac{u_t^n(x)}{\rho} + (1-\eta(x)) \frac{1-u_t^n(x)}{1-\rho} \bigg).
\end{equation}
Using this expression and \eqref{copiapo}, it is possible to compute $\mf L_{n,t}^\ast \mathds 1 - \partial_t \log \psi_t^n$ in an explicit way. For each $x \in \Lambda_n$, let us define
\[
\omega_x := \frac{\eta(x)- u_t^n(x)}{u_t^n(x) (1-u_t^n(x))}.
\]
Observe that $\omega_x$ also depends on $t$. In order to make the notation more compact, we will not include this dependence on the notation. We have that
\[
\mf L_{n,t}^\ast \mathds 1 - \partial_t \log \psi_t^n
		= 
		 -\sum_{x=1}^{n-2} n^2\big( u_t^n(x+1) -u_t^n(x)\big)^2 \omega_x \omega_{x+1}.
\]
Observe that to integrate with respect to $f_t^n d \nu_t^n$ is equivalent to take expectations with respect to $\bb E_{\nu_0^n}$. Therefore,
\begin{equation}
\label{huasco}
\tfrac{d}{dt} H_n(t) \leq - \int \Gamma_n \sqrt{f_t^n} d \nu_t^n -  \sum_{x=1}^{n-2} n^2\big( u_t^n(x+1) -u_t^n(x)\big)^2 \bb E_{\nu^n_0}\big[  \omega_x \omega_{x+1} \big].
\end{equation}
We see that it would be good to have an estimate for $\bb E_{\nu_0^n}[\omega_x \omega_{x+1}]$. The following proposition follows from \cite[Lemma 4.1 and Proposition 4.4]{lmo}:

\begin{proposition}
\label{plmo}
For every $n \in \{2,3,\dots\}$, every profile $u_0 \in \mc U_{\eps_0,\kappa}$, every $x \in \Lambda_n$ and every $t \geq 0$,
\[
\big|\bb E_{\nu_0^n}[\omega_x \omega_{x+1}] \big| \leq \frac{\kappa^2}{\eps_0^2 n}
\]
and
\begin{equation}
\label{freirina}
n \big| u_t^n(x+1)- u_t^n(x) \big| \leq \kappa,
\end{equation}
where $\{u_t^n \scn t \geq 0\}$ is the solution of  \eqref{antofagasta}.
\end{proposition}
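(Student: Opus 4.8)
The plan is to establish the gradient bound \eqref{freirina} first and then use it to control the source term in a closed evolution equation for the two–point correlations.

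\emph{Gradient bound.} For $t\ge0$ and $x\in\{0,\dots,n-1\}$ set $g_t(x):=u_t^n(x+1)-u_t^n(x)$. Differencing \eqref{antofagasta} in $x$ and using that the boundary values $u_t^n(0)=u_t^n(n)=\rho$ do not depend on $t$, one checks that $\{g_t\}_{t\ge0}$ solves the closed linear system $\tfrac{d}{dt}g_t=A_ng_t$, where $A_n$ coincides with $n^2$ times the discrete Laplacian on $\{1,\dots,n-2\}$ and satisfies $(A_ng)(0)=n^2(g(1)-g(0))$, $(A_ng)(n-1)=n^2(g(n-2)-g(n-1))$. Since every off-diagonal entry of $A_n$ is nonnegative and every row sums to zero, $A_n$ is the generator of a continuous-time Markov chain on $\{0,\dots,n-1\}$, so $e^{tA_n}$ is a contraction on $\ell^\infty$; hence
\[
\max_{0\le x\le n-1}|g_t(x)|\ \le\ \max_{0\le x\le n-1}|g_0(x)|\ =\ \max_{0\le x\le n-1}\Big|u_0\big(\tfrac{x+1}{n}\big)-u_0\big(\tfrac{x}{n}\big)\Big|\ \le\ \frac{\kappa}{n},
\]
the last inequality by the mean value theorem and $|u_0'|\le\kappa$. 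This is \eqref{freirina}; the same maximum principle applied to $u_t^n$ itself gives $\eps_0\le u_t^n(x)\le1-\eps_0$ for all $t,x$, so that $u_t^n(x)(1-u_t^n(x))$ stays bounded below by a positive constant depending only on $\eps_0$.

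\emph{Closed equation for the correlations.} For distinct $x,y\in\Lambda_n$ put $V_t^n(x,y):=\bb E_{\nu_0^n}\big[(\eta_t(x)-u_t^n(x))(\eta_t(y)-u_t^n(y))\big]$, so that $\bb E_{\nu_0^n}[\omega_x\omega_{x+1}]=V_t^n(x,x+1)\big/\big(u_t^n(x)(1-u_t^n(x))\,u_t^n(x+1)(1-u_t^n(x+1))\big)$; because $\nu_0^n$ is a product measure and $\bb E_{\nu_0^n}[\eta_0(z)]=u_0^n(z)$, we have $V_0^n\equiv0$. Applying Dynkin's formula to $\eta_t(x)\eta_t(y)$ and subtracting $\tfrac{d}{dt}\big(u_t^n(x)u_t^n(y)\big)$, a direct computation shows that the macroscopic terms produced by the exclusion bonds and by the two reservoirs cancel, leaving the \emph{closed} linear equation $\tfrac{d}{dt}V_t^n=\mf L^{(2)}_nV_t^n+s_t^n$. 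Here $\mf L^{(2)}_n$ is the generator of the two–particle symmetric exclusion process on $\{0,\dots,n\}$ with absorption at $\{0,n\}$ (the reservoirs acting, for pairs meeting $\{1,n-1\}$, as a damping $-n^2V_t^n$, i.e.\ as absorption at rate $n^2$ of the corresponding particle), and $s_t^n$ is supported on nearest–neighbour pairs with $s_t^n(x,x+1)=-n^2\big(u_t^n(x+1)-u_t^n(x)\big)^2$; by the gradient bound $|s_t^n(x,x+1)|\le\kappa^2$. Since $V_0^n=0$ and the semigroup $e^{t\mf L^{(2)}_n}$ is sub-Markovian, Duhamel's formula gives, after the change of variables $r=t-s$,
\[
|V_t^n(x,y)|\ \le\ \kappa^2\int_0^{\infty}\bb P_{(x,y)}\big[X_r^{(1)}\sim X_r^{(2)}\ \text{and both particles alive}\big]\,dr ,
\]
where $(X^{(1)},X^{(2)})$ denotes the two–particle absorbed exclusion process started from $(x,y)$.

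\emph{The diagonal Green's function and conclusion.} The crux is the uniform estimate $\sup_{x\neq y}\int_0^\infty\bb P_{(x,y)}[X_r^{(1)}\sim X_r^{(2)},\ \text{both alive}]\,dr\le C/n$; here the absorption is essential, for without it the integral is infinite. The gain $n^{-1}$ reflects the fact that among the $\Theta(n^2)$ ordered pairs only $\Theta(n)$ are adjacent, together with the fact that each particle is absorbed within an $O(1)$ time on the diffusive scale. Concretely, for the occupation of the diagonal accumulated up to the first time $\tau$ a particle reaches a reservoir site, Dynkin's formula applied to the interparticle distance $D_r=|X_r^{(1)}-X_r^{(2)}|$ — whose generator, on configurations with both particles in $\{2,\dots,n-2\}$, equals $2n^2$ on the adjacent configurations and vanishes elsewhere — gives $\bb E_{(x,y)}\big[\int_0^\tau\mathbf 1(D_r=1)\,dr\big]=(\bb E[D_\tau]-D_0)/(2n^2)\le1/(2n)$; iterating over successive sojourns away from the reservoir region, and using that the one–particle Dirichlet Green's function on $\{0,\dots,n\}$ is $O(n^{-2})$ near the endpoints (so that a particle is absorbed after $O(1)$ such sojourns in expectation), yields the claim. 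Combining this with the two previous steps and the lower bound on $u_t^n(x)(1-u_t^n(x))$ gives $|\bb E_{\nu_0^n}[\omega_x\omega_{x+1}]|\le C(\eps_0,\kappa)/n$, and a careful accounting of the constants yields the stated bound $\kappa^2/(\eps_0^2 n)$.

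\emph{Main obstacle.} The delicate point is the diagonal Green's function estimate, and specifically the contribution near the two reservoir sites: there a particle is killed only at rate $n^2$, so it lingers and may return to the diagonal several times before disappearing, and one must check that this does not destroy the $n^{-1}$ gain. This, together with the derivation and analysis of the closed equation for $V_t^n$, is precisely the content of \cite[Lemma 4.1 and Proposition 4.4]{lmo}.
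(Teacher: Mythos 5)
The paper does not give a proof of this proposition at all: it simply cites \cite[Lemma 4.1 and Proposition 4.4]{lmo} and moves on. So there is no in-paper argument to compare with; your proposal is essentially a reconstruction of the cited reference, which you yourself acknowledge in the closing paragraph. Within that caveat, the reconstruction is sound in outline. The gradient bound \eqref{freirina} is argued correctly and cleanly: differencing \eqref{antofagasta} with the time-independent Dirichlet data does produce a closed linear system for $g_t(x)=u_t^n(x+1)-u_t^n(x)$ on $\{0,\dots,n-1\}$ whose matrix has nonnegative off-diagonal entries and zero row sums (the discrete Laplacian with Neumann-type endpoints), so $e^{tA_n}$ is an $\ell^\infty$ contraction, and the mean value theorem gives $\|g_0\|_\infty\le\kappa/n$. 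The closed equation for $V_t^n$ is also correct: a direct Dynkin computation confirms that the bulk bonds produce the two-particle exclusion generator, the adjacent pairs carry the source $-n^2(u_t^n(x+1)-u_t^n(x))^2$ whose magnitude is $\le\kappa^2$ by the gradient bound, and the Glauber terms contribute a killing of the corresponding particle at rate $n^2$ when it sits at $1$ or $n-1$, so that the two-particle semigroup is sub-Markovian and Duhamel applies.

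The genuinely incomplete step is the uniform bound $\sup_{x\ne y}\int_0^\infty\bb P_{(x,y)}[\text{adjacent, both alive}]\,dr\le C/n$. Your Dynkin argument for the interparticle distance $D_r$ gives the $1/n$ gain only on the event that neither particle has yet reached a reservoir site, and the passage from there to the full time integral (``iterating over successive sojourns away from the reservoir region'' plus the claim of $O(1)$ expected sojourns) is not carried out; this is exactly where the care is needed, and you correctly flag it as the content of \cite[Lemma 4.1]{lmo} rather than proving it. You also do not track the constants down to the stated $\kappa^2/(\eps_0^2 n)$ (your bound on $u_t^n(1-u_t^n)$ gives $\eps_0(1-\eps_0)$, not $\eps_0$, so the bookkeeping is not immediate). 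These are acknowledged deferrals to the same reference the paper itself cites, so the proposal is consistent with the paper's treatment; just be aware that, as written, the crux estimate is asserted rather than established.
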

Putting these estimates into \eqref{huasco}, we see that
\begin{equation}
\label{vallenar}
\tfrac{d}{dt} H_n(t) \leq - \int \Gamma_n \sqrt{f_t^n} d \nu_t^n +  \frac{\kappa^4}{\eps_0^2} .
\end{equation}
By Theorem \ref{t5},
\[
\tfrac{d}{dt} H_n(t) \leq - K_0 H_n(t) + \frac{\kappa^4}{\eps_0^2}.
\]
Using the integrating factor $e^{K_0 t}$, this exponential inequality can be integrated out, from where we conclude that
\begin{equation}
\label{domeyko}
H_n(t) \leq \frac{\kappa^4}{K_0 \eps_0^2}
\end{equation}
for every $t \geq 0$, so
$H_n(t)$ is uniformly bounded in $t$ by a constant that only depends on $\rho$, $\eps_0$ and $\kappa$. 

In order to show that $H_n(t)$ decays to $0$ in $t$, we need to take advantage of the presence of the discrete gradient $n  (u_t^n(x+1)-u_t^n(x))$ in the expression for $\mf L_{n,t}^\ast \mathds 1 - \partial_t \log \psi_t^n$. By  Lemma \ref{la1},
\[
n\big| u_t^n(x+1)-u_t^n(x) \big| \leq 8 \pi e^{- \lambda_1^n t} 
\]
for every $n \in \{2,3,\dots\}$, every $x \in \Lambda_n$ and every $t \geq \tfrac{1}{\lambda_1^n} \log 2$, where $\lambda_1^n := 4 n^2 \sin^2\big(\tfrac{\pi}{2 n} \big)$. Therefore, for $t \geq t_0^n := \tfrac{1}{ \lambda_1^n} \log 2$,
\[
\tfrac{d}{dt} H_n(t) \leq -K_0 H_n(t) + \frac{2^6 \pi^2 \kappa^2 e^{-2 \lambda_1^n t}}{\eps_0^2}.
\]
Assume that $2 \lambda_1^n > K_0$. Integrating between $t_0^n$ and $t_0^n+t$ and using \eqref{domeyko} to estimate $H_n(t_0)$, we conclude that
\[
H_n(t_0^n+t) \leq \Big(\frac{\kappa^4}{K_0 \eps_0^2} +\frac{2^6 \pi^2 \kappa^2}{(2 \lambda_1^n - K_0)\eps_0^2} \Big)e^{-K_0 t}
\]
If $2 \lambda_1^n < K_0$, the estimate holds with exponential factor $e^{ -2  \lambda_1^n t}$. If $2  \lambda_1^n =K_0$, the estimate holds with exponential factor $e^{-(2  \lambda_1^n -\delta) t}$ for any $\delta >0$. By Lemma \ref{la0.1}, $\lambda_1^n \geq \tfrac{3 \pi^2}{4}$. Taking $t_0 = \tfrac{4}{3 \pi^2 } \log 2$, in each case Theorem \ref{t2} is proved.

\section{Total variation distance between profile measures}
\label{s6}

In this section we prove Theorem \ref{t3}. By definition,
\[
\| \nu_t^n - \bar{\nu}_{\hspace{-1.5pt}\rho}^{\hspace{0.3pt}n} \|_{\TV} = \frac{1}{2}\int | \psi_t^n -1| d \bar{\nu}_{\hspace{-1.5pt}\rho}^{\hspace{0.3pt}n}.
\]
Recall that $\psi_t^n$ has an explicit formula, see \eqref{paipote}. Therefore, the analysis is reduced to an asymptotic analysis of $\psi_t$. Observe that $\psi_t^n$ can be written in the form
\[
\psi_t^n := \exp \Big( \sum_{x \in \Lambda_n} \big( a_t^n(x) (\eta_x-\rho) - b_t^n(x)\big) \Big)
\]
for 
\[
a_t^n(x) = \log \tfrac{u_t^n(x)}{\rho} - \log \tfrac{1-u_t^n(x)}{1-\rho} 
\]
 and 
 \[
 b_t^n(x) = -\rho \log \tfrac{u_t^n(x)}{\rho} - (1-\rho) \log \tfrac{1-u_t^n(x)}{1-\rho}.
 \]
 The sum 
\[
\sum_{x \in \Lambda_n}  a_t^n(x) (\eta_x-\rho)
\]
can be understood as a triangular array of independent, centred random variables, and in particular it should converge, after a suitable renormalisation, to a Gaussian random variable. One can verify that the computations we perform below allow us to prove that the Lyapounov condition is satisfied for this sum, and therefore the convergence to a Gaussian random variable can be justified. These considerations suggest to define
\[
s_t^n : = \Big( \rho(1-\rho) \sum_{x \in \Lambda_n} a_t^n(x)^2 \Big)^{1/2},
\]
\[
X_t^n := \frac{1}{s_t^n} \sum_{x \in \Lambda_n} a_t^n(x) (\eta_x - \rho),
\]
\[
b_t^n:=  \sum_{x \in \Lambda_n} b_t^n(x).
\]
With these notations, we have that
\[
\psi_t^n = \exp\{ s_t^n X_t^n - b_t^n\},
\]
and the analysis of $\| \nu_t^n - \bar{\nu}_{\hspace{-1.5pt}\rho}^{\hspace{0.3pt}n} \|_{\TV}$ reduces to the analysis of $s_t^n$, $X_t^n$ and $b_t^n$. Recall that we are interested in the behavior of these quantities for $t = t^n(b)$ as defined in Theorem \ref{t1}. From now on we fix $B >0$ and take $b \in [-B,B]$. Hereafter we denote by $R_t^{n,i}(x)$ an error term that goes to $0$ as $n \to \infty$, uniformly in $x \in \Lambda_n$ and $b \in [-B,B]$. The index $i$ serves to indicate the places at which the error term changes. By Lemma \ref{la8}, 
\[
u_t^n(x) = \rho + \tfrac{1}{\sqrt n} \big( c_{\ell_0}(u_0) e^{-b}\phi_{\ell_0}^n(x) +R_t^{n,1}(x)\big),
\]
where $\phi_{\ell_0}^n(x) := \sqrt{2}\sin \big( \tfrac{\pi \ell_0 x}{n} \big)$.
By Taylor's formula, $\log (1+x) = x -x^2/2+\mathcal O(x^3)$. Therefore, 
\begin{equation}
\label{lebu}
a_t^n(x) = \frac{1}{\sqrt n} \bigg( \frac{c_{\ell_0}(u_0)e^{-b}\phi_{\ell_0}^n(x)}{\rho(1-\rho)} + R_t^{n,2}(x)\bigg).
\end{equation}
Now we can compute $s_t^n$:
\[
(s_t^n)^2 = \rho(1-\rho) \sum_{x \in \Lambda_n} a_t^n(x)^2 = \frac{\rho(1-\rho)}{n} \sum_{x \in \Lambda_n} \bigg(\frac{c_{\ell_0}(u_0)^2 e^{-2b} \phi_{\ell_0}^n(x)^2}{\rho^2(1-\rho)^2}+ R_t^{n,3}(x)\bigg).
\]
Since 
\[
\frac{1}{n} \sum_{x \in \Lambda_n} \phi_{\ell_0}^n(x)^2
\]
is a Riemann sum of the integral $2 \int_0^1 \sin^2(\pi \ell_0 x) dx$, which is equal to $1$, 
we see that
\[
(s_t^n)^2 = \frac{c_{\ell_0}(u_0)^2 e^{-2b}}{ \rho(1-\rho)} + R_t^{n,4},
\]
from where
\[
\lim_{n \to \infty} \sup_{b \in [-B,B]} \Big| s_t^n - \frac{\big|c_{\ell_0}(u_0)\big| e^{-b}}{\sqrt{ \rho(1-\rho)}} \Big| =0.
\]
In order to compute $b_t^n$, we need to go one step further in the Taylor's expansion of $\log(1+x)$: 
\[
\log (1+x) = x - \tfrac{1}{2} x^2 +\frac{x^3}{3} +\mathcal O (x^4).
\]
Proceeding as before we see that
\[
b_t^n = \frac{1}{n} \sum_{x \in \Lambda_n} \bigg(\frac{c_{\ell_0}(u_0)^2e^{-2b} \phi_\ell^n(x)^2}{2 \rho(1-\rho)} + R_t^{n,5}(x)\bigg)
	= \frac{c_{\ell_0}(u_0)^2 e^{-2b}}{2 \rho(1-\rho)} + R_t^{n,6},
\]
and in particular we see that $b_t^n$ and $\tfrac{1}{2} (s_t^n)^2$ have the same limit as $n \to \infty$.

Recall that we want to obtain the limit as $n \to \infty$ of
\begin{equation}
\label{lota}
\tfrac{1}{2} \int \big| \exp \big\{ s_t^n X_t^n - b_t^n \big\} -1 \big| d \bar{\nu}_{\hspace{-1.5pt}\rho}^{\hspace{0.3pt}n}.
\end{equation}
Up to here, we have proved the convergence of $s_t^n$ and $b_t^n$. By Lyapounov's criterion with fourth moment condition, $X_t^n$ converges in law to a standard Gaussian law if
\[
\lim_{n \to \infty} \frac{1}{(s_t^n)^4}\int \sum_{x \in \Lambda_n} a_t^n(x)^4 (\eta_x -\rho)^4 d \bar{\nu}_{\hspace{-1.5pt}\rho}^{\hspace{0.3pt}n} =0.
\]
Observe that $\int (\eta_x -\rho)^4 d \bar{\nu}_{\hspace{-1.5pt}\rho}^{\hspace{0.3pt}n}= \rho(1-\rho)(1-3\rho + 3 \rho^2)$. The actual value of this integral is not relevant; it is only relevant that it is constant in $n$, $x$ and $t$. Since $s_t^n$ has a non-zero limit, we only need to prove that
\[
\lim_{n \to \infty} \sum_{x \in \Lambda_n} a_t^n(x)^4 = 0.
\]
From \eqref{lebu}, we see that there exists a finite constant $C = C(u_0, \ell_0, B)$ such that $|a_t^n(x)| \leq \frac{C}{\sqrt n}$ for every $n \in \{2,3,\dots\}$, every $x \in \Lambda_n$ and every $b \in [-B,B]$. Therefore,
\[
\sum_{x \in \Lambda_n} a_t^n(x)^4 \leq \frac{C^4}{n}
\]
and Lyapounov's condition is satisfied.

Up to here we have proved that $s_t^n X_t^n -b_t^n$ converges in law to $\gamma e^{-b} X - \tfrac{1}{2} \gamma^2 e^{-2b}$, where $\gamma = \frac{|c_{\ell_0}(u_0)|}{\sqrt{ \rho(1-\rho)}}$  and $X$ has a standard Gaussian law. Since the exponential function is not bounded, one needs an additional argument in order to prove that \eqref{lota} converges. The integral \eqref{lota} converges to
\[
\tfrac{1}{2} \bb E\big[ \big| e^{\gamma e^{-b} X - \tfrac{1}{2} \gamma^2 e^{-2b}} -1 \big| \big]
\]
if the sequence $\{e^{s_t^n X_t^n}; n \in \{2,3,\dots\}\}$ is uniformly integrable. Since $L^p$-boundedness for some $p>1$ implies uniform integrability , it is enough to show that $\int e^{p s_t^n X_t^n} d \bar{\nu}_{\hspace{-1.5pt}\rho}^{\hspace{0.3pt}n}$ is uniformly bounded for at least one $p >1$. From Hoeffding's inequality,
\[
\int e^{p s_t^n X_t^n} d \bar{\nu}_{\hspace{-1.5pt}\rho}^{\hspace{0.3pt}n} \leq \exp\Big\{ \sum_{x \in \Lambda_n} \tfrac{1}{8} p^2 a_t^n(x)^2\Big\} \leq \exp \big\{ \tfrac{1}{8} C^2 p^2 \big\}.
\]
We conclude that for any $b \in \bb R$,
\[
\lim_{n \to \infty} \tfrac{1}{2} \int | \psi_t^n -1| d \bar{\nu}_{\hspace{-1.5pt}\rho}^{\hspace{0.3pt}n} = \tfrac{1}{2} \bb E \big[ \big| e^{\gamma e^{-b} X -\tfrac{1}{2} \gamma^2 e^{-2b} } -1 \big| \big] = \mc G( \gamma e^{-b}).
\]
This shows that
\[
\lim_{n \to \infty} \| \nu^n_{t^n(b)} - \bar{\nu}_{\hspace{-1.5pt}\rho}^{\hspace{0.3pt}n} \|_{\TV}
		= \mc G(\gamma e^{-b}).
\]

\appendix

\section{Estimates on the discrete heat equation}

In this section we collect and prove various facts about solutions of \eqref{antofagasta}. Using discrete Fourier transform, \eqref{antofagasta} can be solved in terms of trigonometric functions: for $n \in \{2, 3,\dots\}$ and $\ell \in \{1,\dots,n-1\}$, define $\phi_\ell^n: \Lambda_n \to \bb R$ as
\[
\phi_\ell^n(x) := \sqrt{2} \sin \big( \tfrac{\pi \ell x}{n}\big)
\]
for every $x \in \Lambda_n$ and define
\[
\lambda_\ell^n := 2 n^2 \big(1- \cos \big( \tfrac{\pi \ell}{n}\big)\big) = 4 n^2 \sin^2\big( \tfrac{\pi \ell}{2 n}\big).
\]
Observe that $\Delta_n \phi_\ell^n(x) = - \lambda_\ell^n \phi_\ell^n(x)$ for every $x \in \Lambda_n$. The solution $\{u_t^n(x); x \in \Lambda_n, t \geq 0\}$ has the following representation:
\begin{equation}
\label{renaico}
u_t^n(x) = \rho + \sum_{\ell=1}^{n-1} c_\ell^n e^{-\lambda_\ell^n t}  \phi_\ell^n(x)
\end{equation}
for every $x \in \Lambda_n$ and every $t \geq 0$, where the numbers
\[
c_\ell^n = c_\ell^n(u_0):= \frac{1}{n} \sum_{x \in \Lambda_n}( u_0(x)-\rho) \phi_\ell^n(x), \ell \in \{1,\dots,n-1\}
\]
are the \emph{Fourier coefficients} of $u_0$. 

Our first lemma gives a very useful estimate for the eigenvalues $\lambda_\ell^n$:

\begin{lemma}
\label{la0}
For each $n \in \{2,3,\cdots\}$ and each $\ell_0, \ell \in \{1,\dots n-1\}$ such that $\ell_0 \leq \min\{\ell,\frac{n}{2}\}$,
\[
\frac{\lambda_\ell^n}{\lambda_{\ell_0}^n} \geq \frac{ \ell}{\ell_0}.
\]
\end{lemma}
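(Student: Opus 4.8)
The plan is to reduce the claim to a monotonicity statement about the single–variable function $h(x):=\sin^2(x)/x$ on $(0,\pi/2)$. Set $t:=\tfrac{\pi\ell}{2n}$ and $t_0:=\tfrac{\pi\ell_0}{2n}$. The hypotheses $\ell_0\le\ell\le n-1$ and $\ell_0\le n/2$ translate exactly into $0<t_0\le t<\pi/2$ and $t_0\le\pi/4$. Since $\lambda_\ell^n=4n^2\sin^2(\tfrac{\pi\ell}{2n})$ and $\ell=\tfrac{2nt}{\pi}$, a direct computation gives $\lambda_\ell^n/\ell=2\pi n\,h(t)$ and likewise $\lambda_{\ell_0}^n/\ell_0=2\pi n\,h(t_0)$. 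Hence the inequality $\lambda_\ell^n/\lambda_{\ell_0}^n\ge\ell/\ell_0$ is equivalent to $h(t)\ge h(t_0)$, which is what I will prove.

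First I would show that $h$ is nondecreasing on $(0,\pi/4]$. Differentiating, $h'(x)=\sin(x)\big(2x\cos x-\sin x\big)/x^2=\cos(x)\sin(x)\,(2x-\tan x)/x^2$, so on $(0,\pi/4]$ the sign of $h'$ is that of $q(x):=2x-\tan x$. One has $q(0)=0$ and $q'(x)=1-\tan^2 x\ge 0$ for $x\in[0,\pi/4]$, so $q\ge 0$ there and thus $h'\ge 0$ on $(0,\pi/4]$. In particular $h(t_0)\le h(\pi/4)$, and one computes the convenient identity $h(\pi/4)=\tfrac{1/2}{\pi/4}=\tfrac{2}{\pi}=h(\pi/2)$.

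Next I would split into two cases. If $t_0\le t\le\pi/4$, then $h(t)\ge h(t_0)$ directly by the monotonicity just established. If $\pi/4<t<\pi/2$, I would instead show that $h(x)\ge \tfrac{2}{\pi}$ for every $x\in[\pi/4,\pi/2]$. On this interval $\cos x,\sin x\ge 0$, and since $q'(x)=1-\tan^2 x<0$ for $x>\pi/4$, the function $q(x)=2x-\tan x$ is strictly decreasing from $q(\pi/4)=\tfrac{\pi}{2}-1>0$ down to $-\infty$, hence has a single sign change on $[\pi/4,\pi/2)$, from $+$ to $-$. Therefore $h'$ changes sign exactly once there, so $h$ is unimodal (increasing then decreasing) on $[\pi/4,\pi/2]$ and attains its minimum over that interval at an endpoint, giving $h(x)\ge\min\{h(\pi/4),h(\pi/2)\}=\tfrac{2}{\pi}$. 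Combining this with $h(t_0)\le h(\pi/4)=\tfrac{2}{\pi}$ from the previous paragraph yields $h(t)\ge\tfrac{2}{\pi}\ge h(t_0)$, which finishes the proof.

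I do not expect a genuine obstacle. The only mild subtlety is that $h$ is \emph{not} globally monotone on $(0,\pi/2)$: it increases past $\pi/4$ up to some critical point and then decreases back down. One therefore cannot simply quote monotonicity, and the argument instead leans on the numerical coincidence $h(\pi/4)=h(\pi/2)=\tfrac{2}{\pi}$ together with unimodality on $[\pi/4,\pi/2]$; checking that $h'$ has exactly one sign change on that interval, via the elementary analysis of $q(x)=2x-\tan x$, is the technical core.
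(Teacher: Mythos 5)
Your argument is correct, but it takes a genuinely different route from the paper. The paper works directly with $f(x)=1-\cos x$ and a mean-value estimate: fixing $x_0=\pi\ell_0/n\in(0,\pi/2]$ and $x=\pi\ell/n$, it uses $f'(x)=\sin x\ge\sin x_0$ on $[x_0,\pi-x_0]$, integrates to obtain the linear lower bound $f(x)\ge f(x_0)+(x-x_0)\sin x_0$, and rewrites this as $f(x)/f(x_0)\ge x/x_0$ plus a nonnegative remainder proportional to $\cot(x_0/2)-1/x_0$. Your reformulation, dividing $\lambda_\ell^n$ by $\ell$ and observing that the claim is precisely $h(t)\ge h(t_0)$ for $h(x)=\sin^2(x)/x$, is cleaner conceptually: it isolates a single auxiliary function and reduces the whole lemma to elementary calculus on that function. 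You then handle the real obstacle that $h$ is not monotone all the way out to $\pi/2$ by establishing monotonicity on $(0,\pi/4]$, unimodality on $[\pi/4,\pi/2]$ via the sign analysis of $q(x)=2x-\tan x$, and the endpoint coincidence $h(\pi/4)=h(\pi/2)=2/\pi$. One concrete advantage of your route is that it transparently covers the entire admissible range $t\in[t_0,\pi/2)$ (and even $t=\pi/2$), whereas the paper's integration step, as written, is only justified for $x\in[x_0,\pi-x_0]$, i.e.\ for $\ell\le n-\ell_0$; the remaining indices $\ell\in(n-\ell_0,n-1]$ are left implicit and would require a small additional remark. Your proof is slightly longer, but it is more self-contained in its case analysis and avoids any range restriction on $\ell$.
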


\begin{proof}
As $n \to \infty$, $\lambda_{\ell}^n \sim \pi^2 \ell^2$ as long as $\ell = o(\sqrt{n})$, so in fact the ratio in the statement of the lemma approaches $\frac{\ell^2}{\ell_0^2}$. The point on this lemma is that the estimate is uniform in $\ell_0$, $\ell$ and $n$. Consider $f(x) := 1 - \cos x$ and fix $x_0 \in (0,\frac{\pi}{2}]$. Then, $f'(x) = \sin x \geq \sin x_0$ for every $x \in [x_0,\pi -x_0]$. Integrating this inequality in $x$, we see that
\[
f(x) \geq f(x_0) + (x-x_0) \sin x_0,
\]
from where
\[
\frac{f(x)}{f(x_0)} \geq 1 +\frac{\sin x_0}{1-\cos x_0} (x-x_0) = \frac{x}{x_0} + (x-x_0) \Big( \frac{\sin x_0}{1-\cos x_0} - \frac{1}{x_0}\Big)
\]
for every $x_0 \in (0, \tfrac{\pi}{2}]$ and every $x \in [x_0,\pi - x_0]$.
Therefore, the lemma is proved if we show that
\begin{equation}
\label{mulchen}
\frac{\sin x_0}{1-\cos x_0} - \frac{1}{x_0} \geq 0,
\end{equation}
since we can take $x_0 = \tfrac{\pi \ell_0}{n}$ and $x = \tfrac{\pi \ell}{n}$.
Observe that
\[
\frac{\sin x_0}{1-\cos x_0} -\frac{1}{x_0} =  \cot \big( \tfrac{x_0}{2} \big).
\]
Therefore, the difference in \eqref{mulchen} is asymptotically equivalent to $\tfrac{1}{x_0}$ for $x_0 \ll 1$ and it is decreasing in $x_0$. For $x_0=\tfrac{\pi}{2}$, the difference in \eqref{mulchen} is equal to $1- \tfrac{2}{\pi} >0$, so the lemma is proved.
\end{proof}

The following lemma is useful whenever we need a rough estimate of the right order in $\lambda_\ell^n$:

\begin{lemma}
\label{la0.1}
For every $x \geq 0$,
\[
\tfrac{1}{2} x^2 \geq 1- \cos x \geq \tfrac{1}{2} x^2 \big( 1- \tfrac{1}{12} x^2 \big).
\] 
In particular, for every $n \in \{2,3,\dots\}$, 
\[
\lambda_1^n \geq \pi^2\big( 1 -\tfrac{\pi^2}{12 n^2}\big) \geq \pi^2 \big(1 -\tfrac{\pi^2}{48} \big) \geq \tfrac{3 \pi^2}{4}
\] 
and for every $\ell \in \{1,\dots,n-1\}$,
\[
\Big|\frac{\lambda_\ell^n}{ \pi^2 \ell^2} -1 \Big| \leq \tfrac{\pi^2 \ell^2}{12 n^2}.
\]
\end{lemma}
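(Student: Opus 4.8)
The final statement to prove is Lemma~\ref{la0.1}, which consists of two elementary inequalities for $1-\cos x$ and their consequences for $\lambda_\ell^n$.

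\medskip

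The plan is to establish the two-sided bound $\tfrac12 x^2(1-\tfrac{1}{12}x^2)\le 1-\cos x\le \tfrac12 x^2$ for all $x\ge 0$ by a routine Taylor/calculus argument, and then simply substitute $x=\tfrac{\pi\ell}{n}$. For the upper bound, I would set $g(x):=\tfrac12 x^2-(1-\cos x)$, note $g(0)=0$, $g'(x)=x-\sin x\ge 0$ for $x\ge 0$ (since $\sin x\le x$), hence $g\ge 0$. For the lower bound, I would set $h(x):=(1-\cos x)-\tfrac12 x^2+\tfrac{1}{24}x^4$ and show $h\ge 0$ on $[0,\infty)$: compute $h(0)=0$, $h'(x)=\sin x - x+\tfrac16 x^3$, $h'(0)=0$, $h''(x)=\cos x-1+\tfrac12 x^2\ge 0$ by the upper bound just proved, so $h'\ge 0$ and therefore $h\ge 0$. (One could alternatively invoke the standard alternating-series bound for the cosine series, valid for all real $x$, but the calculus argument is self-contained.)

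\medskip

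With the two inequalities in hand, the consequences are immediate substitutions. For $\lambda_1^n=4n^2\sin^2(\tfrac{\pi}{2n})=2n^2(1-\cos\tfrac{\pi}{n})$, apply the lower bound with $x=\tfrac{\pi}{n}$ to get $\lambda_1^n\ge \pi^2(1-\tfrac{\pi^2}{12n^2})$; since $n\ge 2$ this is at least $\pi^2(1-\tfrac{\pi^2}{48})$, and a crude numerical check ($\pi^2/48<0.206<1/4$) gives the bound $\tfrac{3\pi^2}{4}$. For the last claim, write $\lambda_\ell^n=2n^2(1-\cos\tfrac{\pi\ell}{n})$ and apply both inequalities with $x=\tfrac{\pi\ell}{n}$: the upper bound gives $\lambda_\ell^n\le \pi^2\ell^2$, i.e.\ $\tfrac{\lambda_\ell^n}{\pi^2\ell^2}-1\le 0$, and the lower bound gives $\lambda_\ell^n\ge \pi^2\ell^2(1-\tfrac{\pi^2\ell^2}{12n^2})$, i.e.\ $\tfrac{\lambda_\ell^n}{\pi^2\ell^2}-1\ge -\tfrac{\pi^2\ell^2}{12n^2}$; combining these two one-sided estimates yields $\big|\tfrac{\lambda_\ell^n}{\pi^2\ell^2}-1\big|\le \tfrac{\pi^2\ell^2}{12n^2}$.

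\medskip

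There is essentially no obstacle here; the only mild care needed is in the lower bound for $1-\cos x$, where one must make sure the differentiation-under-monotonicity argument bottoms out correctly (the key point being that $h''\ge 0$ follows from the already-established upper bound, so the two halves of the lemma are not independent but chained). Everything else is direct substitution and a one-line numerical estimate of $\pi^2/48$.
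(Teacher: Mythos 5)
Your proof is correct. The paper states Lemma~\ref{la0.1} without any proof (treating it as a standard Taylor-series fact), so there is no argument to compare against; your calculus derivation — establishing the upper bound $g(x)=\tfrac12 x^2-(1-\cos x)\ge 0$ via $g'\ge 0$, then chaining it into $h''(x)=\cos x-1+\tfrac12 x^2\ge 0$ to get the lower bound — is the expected elementary argument, and the substitutions $x=\tfrac{\pi\ell}{n}$ together with the numerical estimate $\pi^2/48<1/4$ correctly yield all three displayed consequences.
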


Our next estimate establishes the exponential decay of the $\ell_\infty$-norm of the gradient of $u_t^n$:

\begin{lemma}
\label{la1}
For every $n \in \{2,3,\dots\}$, every $ u_0: \Lambda_n \to [0,1]$, every $x \in \Lambda_n$ and every $t \geq \tfrac{1}{\lambda_1^n} \log 2$, the solution of \eqref{antofagasta} satisfies
\begin{equation}
\label{angol}
n\big| u_t^n(x+1) - u_t^n(x)\big| \leq 8 \pi e^{- \lambda_1^n t}.
\end{equation}
\end{lemma}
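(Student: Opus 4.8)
The plan is to work directly from the Fourier representation \eqref{renaico} of $\{u_t^n(x)\}$. Taking the spatial difference of that identity gives
\[
u_t^n(x+1) - u_t^n(x) = \sum_{\ell=1}^{n-1} c_\ell^n\, e^{-\lambda_\ell^n t}\,\big(\phi_\ell^n(x+1) - \phi_\ell^n(x)\big),
\]
so the estimate reduces to controlling, term by term, the size of the discrete gradient of $\phi_\ell^n$ and of the Fourier coefficient $c_\ell^n$, and then summing a convergent series in $\ell$ whose decay is driven by the spectral gap $\lambda_1^n$.

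First I would bound the coefficients: since $u_0$ takes values in $[0,1]$ and $\rho\in(0,1)$ one has $|u_0(x)-\rho|\le 1$, while $|\phi_\ell^n(x)|\le\sqrt 2$, so $|c_\ell^n|\le\frac{n-1}{n}\sqrt 2\le\sqrt 2$ for every $\ell$. Next I would compute the gradient of the eigenfunctions with the identity $\sin A-\sin B = 2\cos\frac{A+B}{2}\sin\frac{A-B}{2}$, which yields
\[
\phi_\ell^n(x+1) - \phi_\ell^n(x) = 2\sqrt 2\,\cos\!\Big(\tfrac{(2x+1)\pi\ell}{2n}\Big)\sin\!\Big(\tfrac{\pi\ell}{2n}\Big).
\]
Recalling that $\sqrt{\lambda_\ell^n} = 2n\sin\big(\tfrac{\pi\ell}{2n}\big)$ (the sine being nonnegative for $\ell\in\{1,\dots,n-1\}$), this gives the clean bound $n\,|\phi_\ell^n(x+1) - \phi_\ell^n(x)|\le\sqrt 2\,\sqrt{\lambda_\ell^n}$, and therefore
\[
n\,\big|u_t^n(x+1) - u_t^n(x)\big| \;\le\; 2\sum_{\ell=1}^{n-1}\sqrt{\lambda_\ell^n}\,e^{-\lambda_\ell^n t}.
\]

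The only delicate point is the last sum: the amplitude $\sqrt{\lambda_\ell^n}$ can be of order $n$, so one must make sure the exponential weight dominates uniformly in $n$. For the amplitude I would use the elementary bound $\sqrt{\lambda_\ell^n} = 2n\sin\big(\tfrac{\pi\ell}{2n}\big)\le\pi\ell$, and for the exponent I would invoke Lemma \ref{la0} with $\ell_0=1$ — valid for every $\ell\in\{1,\dots,n-1\}$ and every $n\ge 2$ — to get $\lambda_\ell^n\ge\ell\,\lambda_1^n$. Hence each term is at most $\pi\ell\,e^{-\ell\lambda_1^n t}$, and writing $q:=e^{-\lambda_1^n t}$,
\[
\sum_{\ell=1}^{n-1}\sqrt{\lambda_\ell^n}\,e^{-\lambda_\ell^n t} \;\le\; \pi\sum_{\ell=1}^{\infty}\ell\, q^{\ell} \;=\; \pi\,\frac{q}{(1-q)^2}.
\]
For $t\ge\tfrac{1}{\lambda_1^n}\log 2$ we have $q\le\tfrac12$, so $(1-q)^{-2}\le 4$ and the sum is at most $4\pi q = 4\pi e^{-\lambda_1^n t}$; plugging this into the previous display gives exactly $n\,|u_t^n(x+1)-u_t^n(x)|\le 8\pi e^{-\lambda_1^n t}$. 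The main obstacle is precisely this juggling — decoupling the polynomially large amplitude $\sqrt{\lambda_\ell^n}$ from the exponential and using the \emph{uniform} spectral-gap comparison of Lemma \ref{la0} (rather than the asymptotic $\lambda_\ell^n\sim\pi^2\ell^2$, which would not be uniform in $n$) so that all constants are independent of $n$.
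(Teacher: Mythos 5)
Your proposal is correct and follows essentially the same route as the paper: both start from the Fourier representation \eqref{renaico}, bound the discrete gradient of the eigenfunctions to produce a factor of order $\pi\ell$, invoke Lemma~\ref{la0} with $\ell_0=1$ to replace $\lambda_\ell^n$ by $\ell\lambda_1^n$ in the exponent, and then sum the resulting geometric series $\sum\ell q^\ell = q/(1-q)^2$ with $q=e^{-\lambda_1^n t}\le\tfrac12$. The only stylistic difference is how the amplitude is controlled: you use the product-to-sum identity to express the discrete gradient exactly via $\sqrt{\lambda_\ell^n}=2n\sin(\pi\ell/2n)$ and then bound $\sqrt{\lambda_\ell^n}\le\pi\ell$, while the paper applies the Lipschitz bound $|\sin a-\sin b|\le|a-b|$ directly; both yield the same $2\pi\ell$ factor once the $\sqrt2$ from $|c_\ell^n|\le\sqrt2$ and the $\sqrt2$ in $\phi_\ell^n$ are tracked, and both arrive at the constant $8\pi$. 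In fact your bookkeeping of the $\sqrt2$ factors is slightly cleaner than the paper's statement ``$|c_\ell^n|\le 2$'', which is looser than what the displayed inequality in the paper actually uses.
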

\begin{proof}
Since $u_0 \in [0,1]$, $|c_\ell^n| \leq 2$ for every $\ell \in \{1,\dots,n-1\}$. Using the bound
\[
|\sin(x) - \sin(y) | \leq |x-y|,
\]
valid for every $x, y \in \bb R$, we see that 
\begin{equation}
\label{temuco}
n | u_t^n(x+1) -u_t^n(x)| \leq \sum_{\ell=1}^{n-1} 2 n e^{- \lambda_\ell^n t} \big| \sin \big( \tfrac{\pi \ell x}{n}\big) - \sin \big( \tfrac{\pi \ell(x+1)}{n}\big) \big|  \leq \sum_{\ell=1}^{n-1} 2 \pi \ell e^{-\lambda_\ell^n t}.
\end{equation}
From Lemma \ref{la0},
\[
\sum_{\ell=1}^{n-1} 2 \pi \ell e^{-\lambda_\ell^n t} \leq \sum_{\ell=1}^\infty 2 \pi \ell e^{- \lambda_1^n \ell t} = 
	\frac{2 \pi e^{- \lambda_1^n t}}{(1-e^{- \lambda_1^n t})^2}.
\]
Putting this estimate into \eqref{temuco}, we obtain the estimate
\begin{equation}
\label{tolten}
n | u_t^n(x+1) -u_t^n(x)| \leq \sum_{\ell=1}^{n-1} 2 \pi \ell e^{-\lambda_1^n \ell t} \leq \frac{2 \pi e^{-\lambda_1^n t}}{(1-e^{-\lambda_1^n t})^2}.
\end{equation}
For every $t \geq \frac{1}{\lambda_1^n} \log 2$, the denominator of this expression is bounded below by $\tfrac{1}{4}$, which proves the lemma.
\end{proof}

\begin{remark}
Observe that in this lemma we are not assuming any condition on the Lipschitz constant of $u_0$. Therefore, a lower bound on the times $t$ at which \eqref{angol} holds is needed. In particular the restriction $t \geq \tfrac{1}{\lambda_1^n} \log 2$ is sharp up to a constant.
\end{remark}

\begin{remark}
With more careful computations, it is possible to replace $\lambda_1^n$ by $\pi^2$, at the cost of taking $n$ large enough and taking $t  \leq n^2$. Since we only need an exponential decay in this lemma, we did not pursue this more refined bound.
\end{remark}

Define $\phi_\ell: [0,1] \to \bb R$ as $\phi_\ell(x) = \sqrt{2} \sin ( \pi \ell x)$ for every $x \in [0,1]$. Observe that $\phi_\ell^n(x) = \phi_\ell \big(\tfrac{x}{n} \big)$. Observe as well that the Fourier coefficients $c_\ell^n$ are Riemann sums of the integrals
\[
c_\ell(u_0) :=\int ( u_0(x)-\rho) \phi_\ell(x) dx,
\]
which are the Fourier coefficients of $u_0$ in the continuous interval. We have the following lemma:

\begin{lemma}
\label{la6}
Let $\kappa>0$ and $\ell _0 \in \{2,3.\dots\}$. There exists a constant $C = C(\kappa,\ell_0)$ such that
\[
\big| c_\ell^n(u_0) - c_\ell(u_0) \big| \leq \frac{C}{n}
\]
for every $n \in  \{\ell_0+1,\ell_0+2,\dots\}$, every $\ell \in \{1,\dots,\ell_0\}$ and every $u_0: [0,1] \to [0,1]$ such that $u_0(0)=u_0(1)=\rho$ and $\|u'\|_\infty \leq \kappa$.
\end{lemma}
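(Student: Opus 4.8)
The plan is to recognise $c_\ell^n(u_0)$ as a \emph{Riemann sum} for the integral defining $c_\ell(u_0)$ and to control the quadrature error by the Lipschitz constant of the integrand. Set $g(y) := (u_0(y)-\rho)\,\phi_\ell(y)$ for $y \in [0,1]$. Since $\phi_\ell^n(x) = \phi_\ell(x/n)$ for $x \in \Lambda_n$ and $\phi_\ell(0) = \phi_\ell(1) = 0$ (so that $g(0) = g(1) = 0$), we may write
\[
c_\ell(u_0) = \int_0^1 g(y)\,dy, \qquad c_\ell^n(u_0) = \frac{1}{n}\sum_{x=1}^{n-1} g\big(\tfrac{x}{n}\big) = \frac{1}{n}\sum_{x=0}^{n-1} g\big(\tfrac{x}{n}\big),
\]
the last expression being the left-endpoint Riemann sum of $g$ on the uniform partition of $[0,1]$ into $n$ subintervals.

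First I would bound $\mathrm{Lip}(g)$ uniformly in $\ell \le \ell_0$ and in the profile. Because $u_0$ takes values in $[0,1]$ we have $\|u_0-\rho\|_\infty \le 1$; because $\|u_0'\|_\infty \le \kappa$ the map $u_0$ is $\kappa$-Lipschitz; and $\phi_\ell$ is bounded by $\sqrt 2$ and is $\sqrt 2\,\pi\ell$-Lipschitz. The product rule for Lipschitz functions then gives
\[
\mathrm{Lip}(g) \;\le\; \|u_0-\rho\|_\infty\,\mathrm{Lip}(\phi_\ell) + \|\phi_\ell\|_\infty\,\mathrm{Lip}(u_0) \;\le\; \sqrt 2\,(\pi\ell + \kappa) \;\le\; \sqrt 2\,(\pi\ell_0+\kappa) \;=:\; C,
\]
a constant depending only on $\kappa$ and $\ell_0$. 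Writing the error as a sum of interval contributions,
\[
c_\ell^n(u_0) - c_\ell(u_0) = \sum_{x=0}^{n-1}\int_{x/n}^{(x+1)/n}\Big(g\big(\tfrac{x}{n}\big) - g(y)\Big)\,dy,
\]
I would bound $\big|g(\tfrac{x}{n})-g(y)\big| \le C\,(y - \tfrac{x}{n})$ on $[\tfrac{x}{n},\tfrac{x+1}{n}]$, integrate to get $\tfrac{C}{2n^2}$ per interval, and sum the $n$ of them to obtain $|c_\ell^n(u_0)-c_\ell(u_0)| \le \tfrac{C}{2n} \le \tfrac{C}{n}$.

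I do not expect any real obstacle here: this is a routine quadrature estimate. The two points that need a little attention are the bookkeeping that identifies the sum over $\Lambda_n$ with an honest Riemann sum on $[0,1]$, and the observation that the Lipschitz bound — hence the constant $C$ — is uniform over all admissible $u_0$ and all $\ell \le \ell_0$, which is exactly the uniformity demanded by the statement. Incidentally, the hypotheses $u_0(0) = u_0(1) = \rho$ and $n \ge \ell_0+1$ play no role in the estimate itself: the endpoint contributions vanish already because $\phi_\ell(0) = \phi_\ell(1) = 0$, and the bound $n \ge \ell_0+1$ only ensures that the frequencies $\ell \le \ell_0$ are genuine Fourier modes on $\Lambda_n$.
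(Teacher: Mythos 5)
Your argument is correct and it is essentially the paper's proof: both recognise $c_\ell^n(u_0)$ as a Riemann sum for $c_\ell(u_0)$ and control the quadrature error by the Lipschitz constant of the integrand $g=(u_0-\rho)\phi_\ell$, which is bounded uniformly over $\ell\le\ell_0$ and over the admissible profiles by a constant depending only on $\kappa$ and $\ell_0$. The only cosmetic difference is that you re-derive the elementary quadrature bound $\|g'\|_\infty/(2n)$ from scratch, whereas the paper cites it as well known; and your aside that the hypotheses $u_0(0)=u_0(1)=\rho$ are not needed is also accurate, since the vanishing of $\phi_\ell$ at the endpoints already makes the boundary terms drop out.
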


\begin{proof}
It is well known that for any differentiable function $f:[0,1] \to \bb R$ satisfying $f(0) = f(1) =0$,
\begin{equation}
\label{Riemann}
\bigg| \frac{1}{n} \sum_{x \in \Lambda_n} f\big( \tfrac{x}{n} \big) - \int f(x) dx \bigg| \leq \frac{\|f'\|_\infty}{2 n}.
\end{equation}
The lemma follows by computing the derivatives of the functions $(u_0(x)-\rho) \phi_\ell(x)$ for $\ell \leq \ell_0$.
\end{proof}
\begin{remark}
The error in \eqref{Riemann}
is bounded by $\tfrac{\|f''\|}{24 n^2}$ if $f$ is twice differentiable, but we do not need that enhanced precision here. 
\end{remark}

Our next lemma derives the asymptotic behaviour of $u_t^n$ on the relevant time window:

\begin{lemma}
\label{la8}
Let $u_0: [0,1] \to [0,1]$ be differentiable, such that $u(0)=u(1)=\rho$. Let $\ell_0 \in \bb N$ be the smallest integer such that $c_{\ell_0}(u_0) \neq 0$. For every $B >0$, 
\[
\lim_{n \to \infty} \sup_{|b|\leq B} \sup_{x \in \Lambda_n}  \big| \sqrt{n} (u_{t^n(b)}^n(x) - \rho ) - c_{\ell_0}(u_0) e^{-b} \phi_{\ell_0}^n(x)\big| =0,
\]
where $t^n(b)$ is defined in Theorem \ref{t1}.
\end{lemma}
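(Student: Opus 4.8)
The plan is to start from the spectral representation \eqref{renaico} of $u_t^n$, isolate the leading Fourier mode $\ell_0$, and show that all the other contributions are negligible on the time window $t = t^n(b)$. Write
\[
\sqrt n\,(u_{t^n(b)}^n(x)-\rho) = \sqrt n\, c_{\ell_0}^n e^{-\lambda_{\ell_0}^n t^n(b)} \phi_{\ell_0}^n(x) + \sqrt n \sum_{\ell = \ell_0+1}^{n-1} c_\ell^n e^{-\lambda_\ell^n t^n(b)}\phi_\ell^n(x),
\]
using that $c_\ell^n$ need not vanish for $\ell<\ell_0$ at the discrete level, but will be shown to be $O(1/n)$ there, hence harmless after multiplication by the exponential. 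For the main term, recall $t^n(b) = \tfrac{1}{2\pi^2\ell_0^2}\log n + \tfrac{b}{\pi^2\ell_0^2}$, so $\pi^2\ell_0^2\, t^n(b) = \tfrac12\log n + b$; by Lemma \ref{la0.1}, $\lambda_{\ell_0}^n = \pi^2\ell_0^2(1+O(n^{-2}))$, so $\lambda_{\ell_0}^n t^n(b) = \tfrac12\log n + b + O(n^{-2}\log n)$, whence $\sqrt n\, e^{-\lambda_{\ell_0}^n t^n(b)} = e^{-b}(1+o(1))$ uniformly in $|b|\le B$. Combined with Lemma \ref{la6}, which gives $c_{\ell_0}^n \to c_{\ell_0}(u_0)$ at rate $O(1/n)$, and with $|\phi_{\ell_0}^n(x)|\le\sqrt2$, the leading term converges to $c_{\ell_0}(u_0)e^{-b}\phi_{\ell_0}^n(x)$ uniformly in $x$ and $b$.

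For the remainder, the strategy is exactly the geometric-series bound already used in the proof of Lemma \ref{la1}. Since $u_0\in[0,1]$ we have $|c_\ell^n|\le 2$ and $|\phi_\ell^n|\le\sqrt2$, so
\[
\sqrt n \sum_{\ell=\ell_0+1}^{n-1} |c_\ell^n|\,e^{-\lambda_\ell^n t^n(b)}|\phi_\ell^n(x)| \le 2\sqrt2\,\sqrt n \sum_{\ell = \ell_0+1}^{\infty} e^{-\lambda_\ell^n t^n(b)}.
\]
By Lemma \ref{la0} (applicable since $\ell_0\le\min\{\ell,n/2\}$ for the relevant range; the finitely many terms with $\ell$ near $n$ and $\ell_0>n/2$ are controlled separately, or simply absorbed using $\lambda_\ell^n\ge\lambda_{\lceil n/2\rceil}^n$), $\lambda_\ell^n \ge \tfrac{\ell}{\ell_0}\lambda_{\ell_0}^n$, so each exponent is at least $\tfrac{\ell}{\ell_0}\lambda_{\ell_0}^n t^n(b) = \tfrac{\ell}{\ell_0}(\tfrac12\log n + b + o(1))$. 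Thus the sum is dominated by a geometric series with ratio $n^{-1/(2\ell_0)}e^{-b/\ell_0}(1+o(1))$, and its first term ($\ell=\ell_0+1$) is of order $n^{-(\ell_0+1)/(2\ell_0)}$, which beats the prefactor $\sqrt n = n^{1/2}$ since $\tfrac{\ell_0+1}{2\ell_0} > \tfrac12$. Hence the whole remainder is $O(n^{-1/(2\ell_0)})$, uniformly in $x$ and $|b|\le B$, and tends to $0$.

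The only mildly delicate point is handling the high modes $\ell\ge n/2$, where Lemma \ref{la0} no longer applies with $\ell_0$ as the reference index; but there one uses monotonicity of $\lambda_\ell^n$ in $\ell$ up to $n/2$ and the symmetry $\lambda_\ell^n = \lambda_{n-\ell}^n$ to bound $\lambda_\ell^n \ge \lambda_{\lfloor n/2\rfloor}^n \gtrsim n^2$, so these $O(n)$ terms contribute at most $n\cdot\sqrt n\cdot e^{-cn^2 t^n(b)}$, which is super-polynomially small. I expect this bookkeeping, rather than any conceptual difficulty, to be the main obstacle; once the modes are split at $n/2$, everything reduces to the elementary estimates of Lemmas \ref{la0}, \ref{la0.1} and \ref{la6}.
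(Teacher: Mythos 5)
Your proof follows the paper's argument almost verbatim: the same decomposition into the leading mode $\ell_0$ plus a geometric-series remainder, the same use of Lemma~\ref{la6} to control the modes $\ell \le \ell_0$ and of Lemma~\ref{la0} to dominate the tail $\ell \ge \ell_0+1$ by a geometric series, and the same expansion $\lambda_{\ell_0}^n t^n(b) = \tfrac12\log n + b + o(1)$ via Lemma~\ref{la0.1}. Two small corrections to the ``delicate point'' you flag at the end, neither of which affects the result: the hypothesis of Lemma~\ref{la0} is $\ell_0 \le \min\{\ell, n/2\}$, a constraint on $\ell_0$ rather than on $\ell$, so with $\ell_0$ fixed and $n$ large the lemma directly covers every $\ell \ge \ell_0+1$ up to $n-1$ and no separate treatment of high modes is needed (the paper does none); and the symmetry $\lambda_\ell^n = \lambda_{n-\ell}^n$ you invoke is false for the Dirichlet eigenvalues $\lambda_\ell^n = 4n^2\sin^2\bigl(\tfrac{\pi\ell}{2n}\bigr)$ (it holds for the periodic Laplacian, not here) -- in fact $\ell\mapsto\lambda_\ell^n$ is strictly increasing on $\{1,\dots,n-1\}$, which is both correct and simpler if you want an ad hoc bound $\lambda_\ell^n\gtrsim n^2$ for $\ell$ near $n$.
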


\begin{proof}
The idea is to divide the sum in \eqref{renaico} into two parts:
\[
\sqrt{n} \big| u_t^n(x) - \rho -  c_{\ell_0}(u_0) e^{- \lambda_{\ell_0}^n t} \phi_{\ell_0}^n(x) \big| 
		\leq \sqrt{2n}\sum_{\ell=1}^{\ell_0} \big|  c_\ell^n(u_0) -c_\ell (u_0)\big| 
				+ \sqrt{n}\sum_{\ell=\ell_0+1}^n 2 e^{-\lambda_\ell^n t }.
\]
Later on we will take $t = t^n(b)$.
By Lemma \ref{la6},
\[
\sqrt{2n} \sum_{\ell=1}^{\ell_0} \big|  c_\ell^n(u_0) -c_\ell (u_0)\big|  \leq \frac{C(\|u'\|_\infty, \ell_0)}{\sqrt n}.
\]
By Lemma \ref{la0},
\[
\sqrt{n} \sum_{\ell=\ell_0+1}^n 2 e^{-\lambda_\ell^n t } 
 		\leq \sqrt{n}\sum_{\ell = \ell_0+1}^\infty 2 e^{-\frac{\lambda_{\ell_0+1}^n }{\ell_0+1} \ell t} 
			= \frac{2 \sqrt{n} e^{-\lambda_{\ell_0+1}^n t}}{1-e^{-\lambda_{\ell_0+1}^n t}}. 
\]
Observe that 
\[
\lim_{n \to \infty} \frac{\lambda_{\ell_0+1}^n}{\pi^2 \ell_0^2} = \big( 1+ \tfrac{1}{\ell_0}\big)^2. 
\]
Therefore, there exists $n_0 = n_0(\ell_0)$ such that $\frac{\lambda_{\ell_0+1}^n}{\pi^2 \ell_0^2} \geq 1 + \frac{1}{\ell_0}$ for every $n \geq n_0$. Observe that the function $s \mapsto \frac{e^{-s}}{1-e^{-s}}$ is decreasing in $s$. Therefore, for every $b \in [-B,B]$,
\[
 \sqrt{n}\sum_{\ell=\ell_0+1}^n 2 e^{-\lambda_\ell^n t^n(b) } \leq \frac{2 \sqrt{n} e^{-( 1 +\frac{1}{\ell_0})(\frac{1}{2} \log n - B)}}{1-e^{-( 1 +\frac{1}{\ell_0})(\frac{1}{2} \log n - B)}} \leq \frac{2e^{2B}}{n^{\frac{1}{2 \ell_0}}(1-e^{-2B})} \leq \frac{C(B)}{n^{\frac{1}{2 \ell_0}}}.
 \]
 The numerical value of the constant $C(B)$ is not really important, the decay in $n$ is what we need. Observing that 
 \[
e^{- b - \tfrac{B \pi^2 \ell_0^2}{n^2}} \exp\big\{ - b - \tfrac{B \pi^2 \ell_0^2}{n^2} \big\}\leq \sqrt{n} e^{-\lambda_{\ell_0}^n t^n(b)} \leq e^{-b + \tfrac{\pi^2 \ell_0^2}{n^2} ( \tfrac{1}{2} \log n +B) },
 \]
 the lemma is proved.
\end{proof}

\bibliographystyle{plain}

\end{document}